\newcommand{\R}{\textnormal{I\kern-0.21emR}}
\newcommand{\N}{\textnormal{I\kern-0.21emN}}
\renewcommand{\geq}{\geqslant}
\renewcommand{\leq}{\leqslant}
\def\e{{\varepsilon}}
\def\Yint#1{\mathchoice
    {\YYint\displaystyle\textstyle{#1}}%
    {\YYint\textstyle\scriptstyle{#1}}%
    {\YYint\scriptstyle\scriptscriptstyle{#1}}%
    {\YYint\scriptscriptstyle\scriptscriptstyle{#1}}%
      \!\iint}
\def\YYint#1#2#3{{\setbox0=\hbox{$#1{#2#3}{\iint}$}
    \vcenter{\hbox{$#2#3$}}\kern-.51\wd0}}
\def\longdash{{-}\mkern-3.5mu{-}} 
\def\tiltlongdash{\rotatebox[origin=c]{15}{$\longdash$}}
\def\fiint{\Yint\tiltlongdash}
\newtheorem*{theorem*}{Theorem}
\newtheorem{theorem}{Theorem}
\newtheorem{material}{material}
\newtheorem{definition}[material]{Definition}
\newtheorem{lemma}[material]{Lemma}
\newtheorem{remark}[material]{Remark}
\def\O{{\Omega}}
\def\n{{\nabla}}
\def\p{{\varphi}}
\def\TT{{(0;T)\times \T}}
\def\TTo{{(0;T)\times \mathbb{T}}}
\def\T{{\mathbb T^d}}
 \newcommandx{\christian}[2][1=]{\todo[linecolor=red,backgroundcolor=red!25,bordercolor=red,#1]{#2}}
 \newcommandx{\laura}[2][1=]{\todo[linecolor=blue,backgroundcolor=blue!25,bordercolor=blue,#1]{#2}}
 \newcommandx{\info}[2][1=]{\todo[linecolor=green,backgroundcolor=green!25,bordercolor=green,#1]{#2}}
 \newcommandx{\improvement}[2][1=]{\todo[linecolor=yellow,backgroundcolor=yellow!25,bordercolor=yellow,#1]{#2}}
  \newcommandx{\biblio}[2][1=]{\todo[linecolor=blue,backgroundcolor=magenta!25,bordercolor=blue,#1]{#2}}
 \numberwithin{equation}{section}
\begin{document}

\title{Another look at qualitative properties of eigenvalues using effective Hamiltonians}


\author{Idriss Mazari-Fouquer}
\date{ \today}

\maketitle
\begin{abstract}
The goal of this paper is to review several qualitative properties of well-known eigenvalue problems using a different perspective based on the theory of effective Hamiltonians, working exclusively on the Hopf-Cole transform of the equation. We revisit some monotonicity results as well as the derivation of several scaling limits by means of the Donsker-Varadhan formula, and we point out several differences between the case of quadratic Hamiltonians and non-quadratic ones.   
\end{abstract}

\paragraph{Keywords:} Effective Hamiltonian, Principal eigenvalue, Spectral optimisation.
\paragraph{Acknowledgement:}This work was started during a visit to Adrian Lam at the Ohio State University (Columbus); the hospitality of the institution and its support are acknowledged.  The author wishes to thank P. Cardaliaguet, A. Lam and S. Liu for numerous fruitful conversations, \textcolor{black}{as well as the anonymous referee of this article for insightful comments that helped improve the paper}. The author was supported by a PSL Young Researcher Starting Grant 2023 from Paris Dauphine Universit\'e PSL. 
\tableofcontents
\section{Introduction}

The goal of this article is to survey several important qualitative results in the theory of principal eigenvalues for (possibly non-symmetric) linear operators that appear, for instance, in the study of population dynamics, and have recently attracted a lot of attention. In particular, we will show how several results follow from the Donsker-Varadhan formulation of eigenvalues; this gives a natural reinterpretation of some functional approaches to the qualitative properties of eigenvalue, see Section \ref{Se:Biblio}. 
\paragraph{Notational conventions}
\begin{itemize}
\item For a measurable set $\O$, $\mathcal P(\O)$ denotes the set of probability measures on $\O$. When $\eta\in \mathcal P(\O)$ has a $\mathscr C^k$ density with respect to the Lebesgue measure, we identify the measure and the density (and in particular we speak of $\mathscr C^k$ probability measures for a measure with $\mathscr C^k$ density).
\item For a measurable set $\O$ and an integrable function $f\in L^1(\O)$, the notation $\fint_\O f$ denotes the average value of $f$: $\fint_\O f=(1/|\O|)\int_\O f$.
\item $\T$ denotes the $d$ dimensional torus. For a given $T>0$, $\alpha\,, \beta \in \N$, $\mathscr C^{\alpha,\beta}_{\mathrm{per}}(\TT)$ denotes the set of $T$-periodic in $t$, $\T$-periodic in $x$ functions that are $\mathscr C^\alpha$ in $t$ and $\mathscr C^\beta$ in $x$. 
\item For $T>0$, $\alpha\in \N$, the set $\mathscr C^\alpha_{\mathrm{per}}((0;T))$ denotes the set of $T$-periodic $\mathscr C^\alpha$ functions in $\mathscr C^\alpha((0;T))$ and $L^\infty_{\mathrm{per}}((0;T))$ denotes the set of $T$-periodic $L^\infty$ functions.
\item The integration variables are implicit and given by the integrals, meaning that $\int_\O f$ refers to the integration in the $x$ variable, $\int_0^T f$ to the integration in the $t$ variable and $\iint_\TT f$ to the integration in the $(t,x)$ variable.
\end{itemize}
\subsection{Main setting}
\paragraph{Effective Hamiltonians in the torus}
Throughout this article, we work in $\T$ and with a fixed time horizon $T>0$. We let $H=H(p)$ be a $\mathscr C^2$ Hamiltonian that satisfies
\begin{equation}\label{Eq:HypH}\begin{cases}
\forall p\in \R^d\setminus\{0\}\,, H(p)> H(0)=0\,, \\\\\forall p\neq 0\,, \n^2_{pp}H(p)\text{ is symmetric positive definite}\,, \\
\\ \text{ There exists a continuous, coercive, convex, non-negative function $\mathcal H$ such that}\\\text{ for all $a>0$ $\mathcal H(a)>\mathcal H(0)=0$ and, }\\\text{for any $f\in \mathscr C^2(\T)$, $\mathcal H\left( \Vert f-\fint_\T f\Vert_{L^1(\T)}\right)\leq \int_\T H(\n_xf).$}
\\
\\ \exists \beta\in (0;1)\,, \forall a\in (0;1)\,, \forall p\in \R^d\,,  a H(p)\geq H(a^\beta p).\end{cases}\end{equation} Observe that the fourth assumption implies the superlinearity of $H$: \[ \lim_{\Vert p\Vert\to \infty}\frac{H(p)}{\Vert p\Vert}=+\infty.\] These properties are satisfied for $H:p\mapsto \vert p\vert^r$ for any $r>1$ (the third property is just the Poincar\'e inequality).

For a fixed $m\in \mathscr C^{1,2}_{\mathrm{per}}(\TT)$, let $(\lambda_H(m),\p_H)$ be the unique eigenpair associated with $\partial_t-\Delta+H(\n_x\cdot)+m$, that is, the unique solution of 
\begin{equation}\label{Eq:MainHamiltonian}\begin{cases}
\lambda_H(m)+\partial_t \p_H-\Delta \p_H+H(\n \p_H)=-m&\text{ in }\TT\,, 
\\ \p_H(T,\cdot)=\p_H(0,\cdot)\,, 
\\ \fiint_\TT \p_H=0,\end{cases}\end{equation}where $\p_H\in \mathscr C^{1,2}_{\mathrm{per}}(\TT)$.
That such a couple exists and is uniquely defined is well-known, as is its link with the long-time behaviour of Hamilton-Jacobi equation. We refer to \cite{zbMATH01578865}. The aim of this paper is to investigate the qualitative properties of $\lambda_H$ and the influence of the different parameters of the equation (typically, replacing $\partial_t-\Delta+H$ with $\tau\partial_t -\mu\Delta+\e H$). Starting with the works of Beltramo \& Hess \cite{zbMATH03897446}, several contributions covered a number of these properties and they often rely on delicate construction of sub and super solutions. Many of these works were motivated by applications of eigenvalue problems to population dynamics \cite[Chapter 2]{zbMATH07668634}. We will show that several of these approaches can be either streamlined or re-interpreted naturally within the framework of the Donsker-Varadhan formula. Let us also observe that we show in a last section how to recover some other well-known results for elliptic problems with advection (the parabolic setting corresponding to a degenerate elliptic operator with advection).

We believe the approach we present here is more flexible and amenable to generalisation, and that the interest also lies in providing another outlook on questions of interest. Before we outline it, let us present the link with the usual eigenvalue problem.

\paragraph{Link with linear eigenvalue problems}
A core problem in the study of semi-linear parabolic equations is the following eigenvalue equation: for a given $m\in \mathscr C^{1,2}_{\mathrm{per}}(\TT)$, consider the principal eigenpair $(\lambda(m),u_m)$ associated with $\partial_t-\Delta-m$ or, in other words, the unique solution of 
\begin{equation}\label{Eq:Main}\begin{cases}
\partial_t u_m-\Delta u_m=mu_m+\lambda(m)u_m&\text{ in }\TT\,, 
\\ u_m(T,\cdot)=u_m(0,\cdot)&\text{ in }\T\,, 
\\ u_m\geq 0\,, u_m\neq 0\,, \fiint_\TT u_m^2=1.\end{cases}\end{equation} The existence and uniqueness of this eigenpair is a consequence of the Krein-Rutman theorem. For the link between \eqref{Eq:Main} and reaction-diffusion equations we refer to \cite[Chapter 2]{zbMATH07668634}. From the strong maximum principle, we can use the Hopf-Cole transform $u_m=e^{-\p_m}$, where $\p_m$ solves 
\[ \lambda(m)+\partial_t\p_m-\Delta \p_m+\vert \n_x \p_m\vert^2=-m.\]In other words, $(\lambda(m),\p_m)$ can be interpreted as the solution of \eqref{Eq:MainHamiltonian} for the quadratic Hamiltonian $H:p\mapsto \vert p\vert^2$. It is important to keep this relation in mind. Of course, several properties (\emph{e.g.} monotonicity in frequency,  in diffusivity etc)  are well-known for $\lambda(m)$, see \cite{zbMATH07668634}, and we will investigate to which extent these results hold for general Hamiltonians. In doing so, we will be led to underlining the exceptionality of the quadratic Hamiltonian $H(p)=|p|^2$ in the class of Hamiltonians.

\textcolor{black}{
\begin{remark}
It should be noted that, for general Hamiltonians $H$, we are not aware of any possibility to tie the effective Hamiltonian to the principal eigenvalue of a linear elliptic operator, as can be seen by looking for a linear operator $L$ and a non-linearity $f$ such that, letting $\p=f(u)$, $u$ should be an eigenfunction of $L$.  Furthermore, in Theorem \ref{Th:Reversibility}, the quadratic Hamiltonian is shown to be the only one having a particular property, that of reversibility (see Definition \ref{De:Reversibility}). For the quadratic Hamiltonian, this property is proved by exploiting the link with \eqref{Eq:Main}.
\end{remark}
}

\paragraph{Donsker-Varadhan formulation of the effective Hamiltonian, invariant measure}
Variational formulations are crucial in spectral problems. Naturally, for non-symmetric operators, the best we can hope for is a saddle-point formulation. Here, this is expressed by the Donsker-Varadhan formula \cite{zbMATH03549753,zbMATH03699420}:
\begin{equation}\label{Eq:DonskerVaradhan}
-\lambda_H=\max_{\eta\in \mathcal P(\TT)}\min_{\varphi\in \mathscr C^{1,2}_{\mathrm{per}}(\TT)} \iint_\TT \left(\partial_t\p-\Delta\p+H(\n_x\p)+m\right)d\eta.\end{equation}It is straightforward (see Lemma \ref{Le:QuantifiedDV} below) to see that there exists a unique saddle point $(\eta_H,\p_H)$ of this problem with $\fiint_\TT \p_H=0$.
\begin{definition}\label{De:InvariantMeasure}
The couple $(\lambda_H,\p_H)$ is called the eigenpair attached to the operator $\partial_t-\Delta+H(\n_x\cdot)+m$. $\lambda_H$ is either called the effective Hamiltonian or the eigenvalue associated with $\partial_t-\Delta+H(\n_x\cdot)+m$. The measure $\eta_H\in \mathscr C^{1,2}_{\mathrm{per}}(\TT)$ is called the associated invariant measure and satisfies
\begin{equation}\label{Eq:DefInvariantMeasure}
\begin{cases}
-\partial_t\eta_H-\Delta \eta_H-\nabla\cdot(\eta_H\n_pH(\n_x\p_H))=0&\text{ in }\TT\,, 
\\ \eta_H(T,\cdot)=\eta_H(0,\cdot)&\text{ in }\T\,, 
\\ \eta_H\geq 0\,, \int_\T \eta_H(t,\cdot)=\frac1T\text{ for any $t\in [0;T]$}.
\end{cases}
\end{equation}\end{definition}

\textcolor{black}{
The wording ``invariant measure" and Eq. \eqref{Eq:DefInvariantMeasure} come from the optimal control interpretation of $\lambda_H(m)$, which is detailed in the next paragraph (see Remark \ref{Re:Interpretation}). }

\begin{remark}[Regarding terminology]
We use  the wordings ``effective Hamiltonian" and ``eigenvalue" indifferently, as they are both standard in different settings. ``Effective Hamiltonian" is more appropriate when dealing with the long-time behaviour of Hamilton-Jacobi equations, while ``eigenvalues" is more convenient when handling the quadratic case, in which case $\lambda_H$ is indeed the principale eigenvalue of a linear operator.
\end{remark}

\begin{remark}[On \eqref{Eq:DonskerVaradhan}]
There are various min-max formulations for eigenvalues, several of which are reviewed in \cite{zbMATH07183857}, where they are put to use to study the monotonicity of eigenvalues when increasing the magnitude of incompressible advection terms. In particular, we emphasise that the core estimate of \cite{zbMATH07183857} is a stability estimate for \eqref{Eq:DonskerVaradhan}, which can be obtained in a more direct manner using the invariant measure $\eta_H$. The generalisation of the approach of \cite{zbMATH07183857} was our initial motivation for the present work. 
\end{remark}

\paragraph{The optimal control interpretation of \eqref{Eq:MainHamiltonian}}
Another interest of the Hopf-Cole transform is that it allows to reformulate the eigenvalue $\lambda(m)$ in terms of an optimal control problem. To be more specific, we let, for any Hamiltonian $H$ satisfying \eqref{Eq:HypH}, $L$ denote the Legendre transform of $H$:
\[ L(\alpha)=\sup_{p\in \R^d}\left(\langle p,\alpha\rangle-H(p)\right).\]
Then there holds
\begin{multline}\label{Eq:OptimalControl} -\lambda_H(m)=\max_{\alpha \in L^\infty_{\mathrm{per}}(0,T;\R^d)}\iint_\TT \left(-L(\alpha)+m\right)d\eta_\alpha\\\text{ with $\eta_\alpha$ the unique solution of }\begin{cases}-\partial_t\eta_\alpha-\Delta \eta_\alpha-\nabla\cdot(\alpha \eta_\alpha)=0\,, 
\\ \eta_\alpha(T,\cdot)=\eta_\alpha(0,\cdot)\,, 
\\ \eta_\alpha\geq 0\,, \int_\T \eta_\alpha(T,\cdot)=\frac1T.\end{cases}\end{multline}
\begin{proof}[Proof of \eqref{Eq:OptimalControl}]
Let for any $\alpha\in L^\infty_{\mathrm{per}}(0,T;\R^d)$ $J(\alpha):=\iint_\TT (m-L(\alpha))d\eta_\alpha$ and let $(\lambda_H,\p_H)$ be the solution of \eqref{Eq:MainHamiltonian}. Multiplying \eqref{Eq:MainHamiltonian} by $\eta_\alpha$, integrating by parts and using the equation satisfied by $\eta_\alpha$ yields
\begin{align*}
-\iint_\TT md\eta_\alpha&=\lambda_H(m)+\iint_\TT H(\n_x\p_H)d\eta_\alpha+\iint_\TT \p_H(-\partial_t\eta_\alpha-\Delta\eta_\alpha)
\\&=\lambda_H(m)+\iint_\TT H(\n_x\p_H)d\eta_\alpha-\iint_\TT \langle \alpha,\n_x\p_H\rangle d\eta_\alpha
\end{align*}
whence 
\[J(\alpha)=-\lambda_H(m)+\iint_\TT (-L(\alpha)-H(\n_x\p_H)+\langle\alpha,\n_x\p_H\rangle)d\eta_\alpha.\]
By definition of $L$, $-L(\alpha)-H(\n_x\p_H)+\langle\alpha,\n_x\p_H\rangle\leq 0$, with equality if $\alpha=\n_pH(\n_x\p_H)$. As $\eta_\alpha\geq 0$ the conclusion follows.
\end{proof}
\textcolor{black}{
\begin{remark}[Interpreting the invariant measure]\label{Re:Interpretation}
The invariant measure has a simple interpretation as the measure describing the large time behaviour of agents engaged in  an optimal control problem, the large-time asymptotics of which is given by \eqref{Eq:OptimalControl}: given a fixed distribution of resources $m$ that varies periodically in time and in space, and given an initial density of players, say $\bar\eta_0$, a central planner is directing each agent to move according to a certain strategy, say $\alpha$, so as to maximise the amount of resources collected by the entire population, while paying a certain cost $L(\alpha)$. It is possible to prove, using standard techniques in Hamilton-Jacobi equations (and adapting for instance \cite{zbMATH02130241}), that, when this optimal control problem takes place in a very large time interval, the density of players will converge to the density $\eta_H$ of the invariant measure. Thus, $\eta_H$ represents a fully adapted density of players.
\end{remark}}

\paragraph{Properties under investigation and interpretation}
We study several limiting regimes and properties for effective Hamiltonians in the following sense: considering, for $\tau,\mu,\e>0$ the effective Hamiltonian $\lambda_H(\tau,\mu,\e)$ associated with $\tau\partial_t-\mu\Delta+\e H(\n_x\cdot)+m$, we want to understand the behaviour of $\lambda_H(\tau,\mu,\e)$ as $\tau\,, \mu\,, \e$ vary. This includes:
\begin{itemize}
\item Understanding the limit $\tau\,,\mu\,, \e\to 0$ or $+\infty$. Several of these limits are well-known in the quadratic case, where they are obtained by working directly on \eqref{Eq:Main}, typically by means of sub and super-solutions. Our approach is different in nature, and we recover several well-known results.
\item Understanding the monotonicity properties of $\lambda$ with respect to the different parameters. This line of research is more recent, and the first major breakthroughs were obtained by Liu, Lou, Peng \& Zhou \cite{zbMATH07122717}. We revisit their approach, and provide a different perspective on their proof, in particular highlighting the importance of the reversibility of the quadratic Hamiltonian (see Definition \ref{De:Reversibility}).
\end{itemize}
Throughout and unless stated otherwise, $H$ satisfies \eqref{Eq:HypH} and $m\in \mathscr C^{1,2}_{\mathrm{per}}(\TT)$.

\paragraph{One remark regarding the domain we work on}
Although all of our analysis is done in the case of the torus $\T$, the same results and methods hold when working in a fixed, compact and smooth domain of $\R^d$ (in space), retaining the periodicity conditions in time but replacing the periodicity conditions in space with Neumann boundary conditions. 

\subsection{Basic qualitative properties of effective Hamiltonians}
We begin with basic estimates on effective Hamiltonians that were originally obtained by Hutson, Shen \& Vickers \cite{zbMATH01579419}  in the quadratic case .
\begin{theorem}\label{Th:Basic}
There holds
\begin{equation}\label{Eq:BoundL}
-\Vert m\Vert_{L^\infty(\TT)}\leq \lambda_H(m)\leq\xi\left(\fint_0^Tm(t,\cdot)dt\right)\end{equation} where $\xi\left(\fint_0^Tm(t,\cdot)dt\right)$ is the first (elliptic) eigenvalue associated with $-\Delta+H(\n_x\cdot)+\fint_0^T m(t,\cdot)dt$. The left-hand side inequality is an equality if, and only if, $m$ is constant. The right-hand side inequality is an equality if, and only if, $m$ does not depend on $t$.\end{theorem}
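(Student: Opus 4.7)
For the left-hand inequality, I would plug $\varphi \equiv 0$ as an inner trial function in \eqref{Eq:DonskerVaradhan}. Since $H(0)=0$, this immediately gives
$$-\lambda_H(m) \leq \max_{\eta \in \mathcal P(\TT)} \iint_\TT m\,d\eta \leq \|m\|_{L^\infty(\TT)}.$$
For the equality case I would use the optimal-control identity behind \eqref{Eq:OptimalControl}, namely $-\lambda_H = \iint m\,d\eta_H - \iint L(\alpha_H)\,d\eta_H$ with $\alpha_H = \nabla_p H(\nabla_x \varphi_H)$. Because $L\geq 0$ vanishes only at $0$ (the subdifferential of $H$ at its unique minimum $0$ being $\{0\}$) and because $\eta_H$ is everywhere positive, equality forces $\nabla_x \varphi_H \equiv 0$; feeding $\varphi_H(t,x)=c(t)$ back into \eqref{Eq:MainHamiltonian} shows $m$ is a function of $t$ alone, and the further sharpness requirement $\iint m\,d\eta_H = \|m\|_{L^\infty(\TT)}$ then pins $m$ to a constant.

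For the right-hand inequality, I would time-average \eqref{Eq:MainHamiltonian} over $(0,T)$. Periodicity kills the $\partial_t \varphi_H$ contribution, and Jensen's inequality applied to the convex $H$ yields, with $\bar\varphi(x):=\fint_0^T \varphi_H(t,x)\,dt$ and $\bar m(x):=\fint_0^T m(t,\cdot)\,dt$,
$$\lambda_H - \Delta \bar\varphi(x) + H(\nabla \bar\varphi(x)) \leq -\bar m(x)\quad\text{in }\T.$$
Thus $(\lambda_H,\bar\varphi)$ is a classical subsolution of the elliptic eigenvalue equation for $-\Delta + H(\nabla_x\cdot) + \bar m$. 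Subtracting the elliptic eigenpair $(\xi(\bar m),\psi)$, controlling $H(\nabla\bar\varphi)-H(\nabla\psi)$ below by $\langle\nabla_p H(\nabla\psi),\nabla(\bar\varphi-\psi)\rangle$, and testing the resulting inequality against the elliptic invariant measure associated with $-\Delta + H(\nabla_x\cdot)+\bar m$ (which annihilates the adjoint of the linearised operator) yields $\lambda_H \leq \xi(\bar m)$. For the equality case, equality in Jensen combined with the strict convexity of $H$ away from its minimum forces $\nabla_x \varphi_H$ to be independent of $t$, so that $\varphi_H(t,x)=\bar\varphi(x)+c(t)$. Plugging back into \eqref{Eq:MainHamiltonian} then shows $m(t,x)-\bar m(x)$ depends only on $t$, which under the natural normalisation amounts to saying that $m$ is time-independent.

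The main obstacle will be upgrading the two variational equalities into pointwise information on $\varphi_H$: both sharpness arguments rely on strict convexity of $H$ away from its minimum (from \eqref{Eq:HypH}) and on the everywhere-positivity of the invariant measure $\eta_H$ (built into Definition \ref{De:InvariantMeasure}). Without these two ingredients, the equality cases cannot be read directly off the variational bounds.
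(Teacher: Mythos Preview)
Your argument for the two inequalities is essentially the paper's: both use the Donsker--Varadhan saddle formula, time-average via Jensen, and test against the elliptic invariant measure $\overline\eta$. The only cosmetic difference is the left bound, where the paper evaluates \eqref{Eq:MainHamiltonian} at a minimum of $\varphi_H$ (so $\partial_t\varphi_H=0$, $\nabla_x\varphi_H=0$, $-\Delta\varphi_H\leq 0$, giving $-\lambda_H\leq m(t^*,x^*)$) while you take $\varphi\equiv 0$ in the inner minimisation; both are one-line arguments.

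You go further than the paper by actually arguing the equality cases, which the paper's proof omits. Your treatment of the left equality is correct (and can be shortened: from $-\lambda_H=\iint m\,d\eta_H-\iint L(\alpha_H)\,d\eta_H$ with $L\geq 0$ and $\eta_H>0$, equality with $\Vert m\Vert_{L^\infty}$ already forces $m\equiv\Vert m\Vert_{L^\infty}$ on $\operatorname{supp}\eta_H=\TT$, without first reducing to $m=m(t)$). For the right equality your chain is also correct up to and including the step where Jensen equality and strict convexity of $H$ force $\varphi_H(t,x)=\bar\varphi(x)+c(t)$, hence $m(t,x)-\bar m(x)=-c'(t)$ depends only on $t$. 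However, your final sentence (``under the natural normalisation this amounts to $m$ time-independent'') is not right: what you have derived is precisely that $m(t,x)=m_0(x)+h(t)$ for some periodic $h$, and one checks directly that any such $m$ gives equality. This is genuinely weaker than $m$ being $t$-independent, so the ``only if'' direction as stated in the theorem should read ``$m$ has separated variables'' --- compare the equality characterisation in Theorem~\ref{Th:Frequency}.
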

We then show a generalisation of a result of Beltramo \& Hess \cite[Proposition 15.4]{zbMATH00049232}. While the proof of this result (originally given in \cite{zbMATH03897446}) relies on a delicate comparison with a Dirichlet eigenvalue, we provide a different approach based on the optimal control interpretation \eqref{Eq:OptimalControl} of $\lambda_H$.
\begin{theorem}\label{Th:BeltramoHess}
Let $m$ be such that 
\begin{equation}\label{Eq:AssumptionBeltramoHess} \int_0^T \left(\max_{x \in \T}m(t,x)\right)dt>0.\end{equation} Let for any $\e>0$ $\lambda_H(\e)$  be the effective Hamiltonian associated with $\partial_t-\Delta+H(\n_x\cdot)+\frac1\e m$. Then 
\[ \lambda_H(\e)\underset{\e\to 0}\rightarrow -\infty.\]
\end{theorem}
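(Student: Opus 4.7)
The plan is to exploit the optimal control formulation \eqref{Eq:OptimalControl}, applied with $m$ replaced by $m/\e$, which yields for every admissible drift $\alpha$ the lower bound
\[-\lambda_H(\e) \;\geq\; -\iint_\TT L(\alpha)\, d\eta_\alpha \;+\; \frac{1}{\e}\iint_\TT m\, d\eta_\alpha.\]
It therefore suffices to exhibit a single drift $\alpha_*$, independent of $\e$, for which $\iint_\TT L(\alpha_*)\, d\eta_{\alpha_*}$ is finite while $\iint_\TT m\, d\eta_{\alpha_*} > 0$: the resulting estimate $-\lambda_H(\e) \geq -C + c/\e$ then forces $\lambda_H(\e) \to -\infty$ as $\e \to 0^+$.

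The crux of the argument, and the main obstacle, is the construction of a smooth periodic path $x^* : [0,T]\to \T$ along which $m$ has strictly positive time-average, i.e.\ $\int_0^T m(t, x^*(t))\, dt > 0$. I would start from a measurable selection $y^*(t)\in \arg\max_{x\in\T} m(t, \cdot)$, whose existence follows from the Kuratowski--Ryll-Nardzewski theorem applied to the upper hemicontinuous argmax correspondence (with nonempty compact values in $\T$), and note that $\int_0^T m(t, y^*(t))\, dt = \int_0^T \max_x m(t, \cdot)\, dt > 0$ by \eqref{Eq:AssumptionBeltramoHess}. Then Lusin's theorem, applied on the circle $[0,T]/\{0\sim T\}$ to preserve periodicity, produces a continuous periodic approximation of $y^*$ coinciding with it outside a set of arbitrarily small measure; since $m$ is bounded on $\TT$, the corresponding integral stays bounded below by some fixed $c_0 > 0$. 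A standard mollification on the circle finally yields a smooth periodic $x^*$ satisfying $\int_0^T m(t, x^*(t))\, dt \geq c_0$. This step is the delicate one because the argmax of $m(t,\cdot)$ need not depend continuously on $t$.

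With $x^*$ in hand, I would fix a smooth strictly positive probability density $\rho_\sigma$ on $\T$ (for example a periodized Gaussian of small but fixed variance $\sigma^2$) and define the candidate measure
\[\eta_*(t,x) := \frac{1}{T}\,\rho_\sigma\bigl(x - x^*(t)\bigr),\]
which satisfies the normalization $\int_\T \eta_*(t,\cdot) = 1/T$. A direct computation shows that the drift making $\eta_*$ solve the Fokker--Planck constraint in \eqref{Eq:OptimalControl} is
\[\alpha_*(t,x) := \dot x^*(t) - \n_x \log \rho_\sigma\bigl(x - x^*(t)\bigr),\]
and this lies in $L^\infty(\TT)$ because $\sigma$ is fixed, $\rho_\sigma$ is smooth and bounded away from zero, and $x^*$ is smooth. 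Hence $L(\alpha_*)$ is bounded on $\TT$ and $\iint_\TT L(\alpha_*)\, d\eta_*$ is finite. Choosing $\sigma$ small enough guarantees, by continuity of $m$, that $\iint_\TT m\, d\eta_* \geq c_0/(2T) > 0$. Plugging $\alpha_*$ into \eqref{Eq:OptimalControl} then yields the announced divergence of $-\lambda_H(\e)$, and the theorem follows.
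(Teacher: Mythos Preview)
Your argument is correct and follows the same overall strategy as the paper: exploit the optimal control formulation \eqref{Eq:OptimalControl} with a single $\e$-independent drift whose invariant measure concentrates near a smooth periodic curve along which $m$ has positive time average, so that $-\lambda_H(\e)\geq -C+c/\e$. The implementations differ in two places. First, the paper simply invokes \cite[Proof of Proposition~15.4]{zbMATH00049232} for the existence of the smooth periodic curve $\gamma$, whereas you build it by hand via measurable selection, Lusin on the circle, and mollification; your route is self-contained but heavier. Second, and more notably, the paper produces its test measure by convolving the graph measure $\overline\eta_\gamma$ with a space--time mollifier and then \emph{solves an elliptic equation} $-\nabla\cdot(\overline\eta_{\gamma,\delta}\nabla_x\beta)=\partial_t\overline\eta_{\gamma,\delta}+\Delta\overline\eta_{\gamma,\delta}$ at each time slice to recover the drift $\alpha_\delta=\nabla_x\beta$. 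Your translated-profile ansatz $\eta_*(t,x)=\frac1T\rho_\sigma(x-x^*(t))$ has the advantage that the Fokker--Planck drift is \emph{explicit}, $\alpha_*=\dot x^*(t)-\nabla_x\log\rho_\sigma(x-x^*(t))$, so no auxiliary PDE is needed and the $L^\infty$ bound on $\alpha_*$ (hence the finiteness of $\iint L(\alpha_*)\,d\eta_*$) is immediate from the smoothness and strict positivity of $\rho_\sigma$ on $\T$. This is a mild but genuine simplification over the paper's construction.
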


\subsection{Some scaling limits of effective Hamiltonians}

\paragraph{The low and large frequency limits}
In this section we let for any $\tau>0$ $\lambda_H(\tau)$ be the effective Hamiltonian associated with the operator $\tau\partial_t-\Delta+H(\n_x\cdot)+m$.
 We begin by providing an alternative proof of the results of Liu, Lou, Peng \& Zhou \cite{zbMATH07122717}.
\begin{theorem}\label{Th:AsymptoticsFrequency}There holds \begin{equation}\label{Eq:LargeFrequency}
\lim_{\tau\to \infty}\lambda_H(\tau)=\xi\left(\fint_0^Tm(t,\cdot)dt\right)\end{equation} where $\xi\left(\fint_0^Tm(t,\cdot)dt\right)$ is the  (elliptic) eigenvalue associated with $-\Delta+H(\n_x\cdot)+\fint_0^T m(t,\cdot)dt$ and \begin{equation}\label{Eq:LowFrequency}
\lim_{\tau \to 0}\lambda_H(\tau)=\fint_0^T \xi(m(t,\cdot))dt
\end{equation} where for any $t\in [0;T]$ $\xi(m(t,\cdot))$ is the  (elliptic) eigenvalue associated with $-\Delta+H(\n_x\cdot)+ m(t,\cdot)$.
\end{theorem}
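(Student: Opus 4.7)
The plan is to establish both limits by sandwich arguments. A lower bound on $\lambda_H(\tau)$ will come from the min--max $\leq$ max--min inequality applied to the Donsker--Varadhan formula \eqref{Eq:DonskerVaradhan} (extended verbatim to the operator $\tau\partial_t-\Delta+H(\n_x\cdot)+m$), namely
\[ -\lambda_H(\tau)\leq \sup_{\TT}\bigl[\tau\partial_t\varphi-\Delta\varphi+H(\n_x\varphi)+m\bigr]\quad\text{for any admissible }\varphi,\]
while the matching upper bound will come from the optimal-control formula \eqref{Eq:OptimalControl}, which yields $-\lambda_H(\tau)\geq \iint_\TT(-L(\alpha)+m)d\eta_\alpha^\tau$ for any admissible $\alpha$. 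In both regimes, the auxiliary objects will be built out of the relevant elliptic eigenfunction(s).

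For the large frequency limit, denote $\bar m(x):=\fint_0^T m(s,x)ds$. The inequality $\lambda_H(\tau)\leq \xi(\bar m)$ is already provided by Theorem \ref{Th:Basic}. For the matching lower bound I use the ansatz $\varphi^\tau(t,x):=\varphi_\infty(x)+\tfrac{1}{\tau}\psi(t,x)$, where $\varphi_\infty$ is the elliptic eigenfunction associated with $-\Delta+H(\n_x\cdot)+\bar m$ and $\psi(t,x):=-\int_0^t\bigl(m(s,x)-\bar m(x)\bigr)ds$ (which is $T$-periodic because $m-\bar m$ has zero $t$-mean). By construction $\tau\partial_t\varphi^\tau=\bar m-m$, and a first-order Taylor expansion of $H$ around $\n\varphi_\infty$ (licit since $H\in\mathscr C^2$) yields
\[ \tau\partial_t\varphi^\tau-\Delta\varphi^\tau+H(\n_x\varphi^\tau)+m=-\xi(\bar m)+O(1/\tau)\]
uniformly on $\TT$. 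The sandwich inequality then gives $\lambda_H(\tau)\geq \xi(\bar m)-O(1/\tau)$, completing the proof of \eqref{Eq:LargeFrequency}.

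For the low frequency limit, let $(\xi(t),\varphi_t,\eta_t)$ denote, for each $t$, the elliptic eigenpair and invariant measure for $-\Delta+H(\n_x\cdot)+m(t,\cdot)$; elliptic regularity together with the smoothness of $m$ in $t$ ensures this triple is $\mathscr C^1$ in $t$. Write $\bar\xi:=\fint_0^T\xi(s)ds$. For the lower bound, the ansatz
\[ \varphi^\tau(t,x):=\varphi_t(x)+\frac{1}{\tau}\int_0^t\bigl(\xi(s)-\bar\xi\bigr)ds\]
is $T$-periodic because $\int_0^T(\xi-\bar\xi)=0$, and direct substitution using the pointwise elliptic identity $-\Delta\varphi_t+H(\n\varphi_t)=-m(t,\cdot)-\xi(t)$ gives
\[ \tau\partial_t\varphi^\tau-\Delta\varphi^\tau+H(\n_x\varphi^\tau)+m=-\bar\xi+\tau\partial_t\varphi_t=-\bar\xi+O(\tau),\]
so $\lambda_H(\tau)\geq \bar\xi-O(\tau)$. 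For the upper bound, I insert the $t$-adapted control $\alpha^*(t,x):=\n_pH(\n_x\varphi_t(x))$ into \eqref{Eq:OptimalControl}; its associated Fokker--Planck equation $-\tau\partial_t\eta-\Delta\eta-\n\cdot(\alpha^*\eta)=0$ decouples in $t$ as $\tau\to 0$ into the elliptic invariant-measure equation whose unique mass-$1/T$ solution is $\eta_t/T$. A compactness-plus-uniqueness argument then yields $\eta_{\alpha^*}^\tau(t,x)\to \eta_t(x)/T$ in, say, $L^1(\TT)$, and combined with the elliptic DV identity $\int_\T\bigl(-L(\alpha^*(t,\cdot))+m(t,\cdot)\bigr)\eta_t=-\xi(t)$ (for each $t$) this gives $\iint_\TT(-L(\alpha^*)+m)d\eta_{\alpha^*}^\tau\to -\bar\xi$, whence $\lambda_H(\tau)\leq \bar\xi+o(1)$.

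The main obstacle is the singular limit $\eta_{\alpha^*}^\tau(t,x)\to \eta_t(x)/T$ in the low-frequency regime. It requires $\tau$-uniform a priori estimates on $\eta_{\alpha^*}^\tau$ (typically an $L^p$ bound together with a Harnack-type lower bound preventing degeneration) in order to pass to the limit in the Fokker--Planck equation and identify the limit via uniqueness of the elliptic invariant measure with prescribed mass $1/T$. Everything else reduces to checking that the ansatze belong to $\mathscr C^{1,2}_{\mathrm{per}}(\TT)$, direct substitution, and routine Taylor expansion.
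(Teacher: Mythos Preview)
Your approach is correct but genuinely different from the paper's, and in some respects cleaner. For $\tau\to\infty$, the paper uses Theorem~\ref{Th:Basic} for one inequality (as you do) but obtains the matching bound by extracting a weak limit $\eta_\infty$ of the invariant measures $\eta_\tau$, showing $\partial_t\eta_\infty=0$, and then testing $\overline\varphi$ against $\eta_\tau$. Your explicit corrector $\varphi_\infty+\tfrac1\tau\psi$ plugged into the $\sup$--bound is more direct and even yields a rate $O(1/\tau)$. For the $\tau\to 0$ bound $\lambda_H(\tau)\geq\bar\xi-O(\tau)$, the paper also tests $\phi(t,x)=\varphi_t(x)$, but against the \emph{optimal} measure $\eta_\tau$ rather than against the $\sup$; your additive correction $\tfrac1\tau\int_0^t(\xi-\bar\xi)$ neatly eliminates the $\xi(t)$ remainder.

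The real divergence is in the upper bound $\lambda_H(\tau)\leq\bar\xi+o(1)$ as $\tau\to 0$. The paper avoids your Fokker--Planck singular limit entirely: it exploits that the eigenfunction $\varphi_\tau$ satisfies $F(\varphi_\tau)\equiv -\lambda(\tau)$ \emph{pointwise}, so integrating against $\overline\eta(t,x)=\eta_t(x)/T$ gives
\[ -\lambda(\tau)=\iint_{\TT}\tau(\partial_t\varphi_\tau)\,d\overline\eta+\fint_0^T\!\!\int_{\T}\bigl(-\Delta\varphi_\tau+H(\nabla\varphi_\tau)+m(t,\cdot)\bigr)d\eta_t\geq -\iint_{\TT}(\tau\varphi_\tau)\partial_t\overline\eta-\fint_0^T\xi(t)dt,\]
the second inequality coming from the elliptic DV at each $t$. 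The remaining task is to show $\iint(\tau\varphi_\tau)\partial_t\overline\eta\to 0$, which the paper does using the third and fourth structural assumptions in~\eqref{Eq:HypH} (the $\mathcal H$-coercivity and the scaling $aH(p)\geq H(a^\beta p)$) to control $\tau\varphi_\tau$ in $L^1$ after recentering in $x$. This is fully self-contained.

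Your route via $\eta_{\alpha^*}^\tau\to\eta_t/T$ is viable but, as you acknowledge, the singular-perturbation step is not routine: you need uniform-in-$\tau$ bounds on $\eta_{\alpha^*}^\tau$ (a Harnack lower bound plus $H^1$ control), and then an adiabatic-type argument, typically driven by the uniform spectral gap of the frozen elliptic operators $-\Delta-\nabla\cdot(\alpha^*(t,\cdot)\,\cdot)$. This can be done, but it is heavier than the paper's argument. On the other hand, your approach does not invoke the $\mathcal H$-coercivity or scaling assumptions on $H$ at all, so it would apply under weaker hypotheses on the Hamiltonian---a trade-off worth noting.
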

\paragraph{The low and large  diffusion limits }
\begin{theorem}\label{Th:LowDiffusion2}
Let for any $\mu>0$ and any $m\in \mathscr C^{1,2}_{\mathrm{per}}(\TT)$ $\lambda_H(m,\mu)$ be the first eigenvalue of $\partial_t+\mu(-\Delta+H(\n_x\cdot))+m$. Then 
\[ -\lambda_H(m,\mu)\underset{\mu\to 0}\rightarrow \max_{x\in \T}\fint_0^T m(t,x)dt\] and 
\[ -\lambda_H(m,\mu)\underset{\mu\to \infty}\rightarrow \fiint_\TT m.\]
\end{theorem}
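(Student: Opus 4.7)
The plan is to bound $-\lambda_H(m,\mu)$ from above and below using the Donsker-Varadhan formulation \eqref{Eq:DonskerVaradhan}. Lower bounds come directly from testing against a well-chosen probability measure $\eta \in \mathcal{P}(\TT)$, while upper bounds follow from the weak-duality inequality $-\lambda_H(m,\mu) \leq \inf_{\varphi}\sup_{(t,x)\in\TT}\bigl[\partial_t\varphi - \mu\Delta\varphi + \mu H(\nabla_x\varphi) + m\bigr]$, obtained by swapping the max over $\eta$ and the min over $\varphi$ in \eqref{Eq:DonskerVaradhan}.

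\textbf{High diffusion.} Testing \eqref{Eq:DonskerVaradhan} with the uniform measure $\eta \equiv 1/(T|\T|)$ makes the $\partial_t\varphi$ and $\Delta_x\varphi$ contributions vanish by $x$- and $t$-periodicity, and the inner minimum is attained at $\varphi \equiv 0$ since $H \geq H(0) = 0$, yielding $-\lambda_H(m,\mu) \geq \fiint_\TT m$ uniformly in $\mu$. For the matching upper bound as $\mu \to \infty$, I would use the competitor $\varphi(t,x) := w(t) + v(t,x)/\mu$, where $v(t,\cdot)$ is the zero-mean solution on $\T$ of $-\Delta_x v(t,\cdot) = \fint_\T m(t,\cdot) - m(t,\cdot)$ and $w(t) := \int_0^t (\fiint_\TT m - \fint_\T m(s,\cdot))\,ds$ is $T$-periodic. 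A direct computation reduces the integrand to $\fiint_\TT m + \mu^{-1}\partial_t v + \mu H(\nabla_x v/\mu)$, and the fourth line of \eqref{Eq:HypH} applied with $a = 1/\mu \in (0,1)$ shows $\mu H(\nabla_x v/\mu) \leq H(\mu^{\beta-1}\nabla_x v) \to H(0) = 0$ uniformly as $\mu \to \infty$ since $\beta < 1$ and $v$ is fixed.

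\textbf{Low diffusion.} For the upper bound as $\mu \to 0$, the competitor $\varphi(t,x) := t\,\bar m(x) - \int_0^t m(s,x)\,ds$, where $\bar m(x) := \fint_0^T m(t,x)dt$, is $T$-periodic in $t$ and has $\nabla_x\varphi$ and $\Delta_x\varphi$ uniformly bounded in $\mu$; it satisfies $\partial_t\varphi + m = \bar m(x)$, so the integrand equals $\bar m(x) + O(\mu)$ uniformly in $(t,x)$, giving $\limsup_{\mu\to 0}(-\lambda_H) \leq \max_x \bar m(x)$. For the lower bound, I would test \eqref{Eq:DonskerVaradhan} with $\eta(t,x) = \rho_\epsilon(x)/T$, where $\rho_\epsilon \in \mathscr{C}^\infty(\T)$ is a positive probability density concentrating at a maximizer $x^* \in \arg\max \bar m$. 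Periodicity kills the $\partial_t\varphi$ term; Jensen's inequality in $t$ (with equality when $\varphi$ is time-independent) reduces the inner minimization to a stationary problem on $\T$; and pointwise Fenchel-Young between $H$ and $L = H^*$ bounds the outcome from below by $\int_\T\bar m\,\rho_\epsilon\,dx - \mu \int_\T L(-\nabla_x\log\rho_\epsilon)\rho_\epsilon\,dx$. Fixing $\epsilon$ and letting $\mu \to 0$ yields $\liminf_{\mu\to 0}(-\lambda_H) \geq \int_\T\bar m\,\rho_\epsilon\,dx$, and then $\epsilon \to 0$ gives $\bar m(x^*) = \max_x \bar m$.

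\textbf{Main obstacle.} The most delicate step is the Jensen/Fenchel-Young reduction in the low-diffusion lower bound; one must keep track of the fact that the Jensen inequality in $t$ (for convex $H$) is sharp on time-independent competitors, and ensure that the integral $\int_\T L(-\nabla_x\log\rho_\epsilon)\rho_\epsilon\,dx$ is finite for the chosen mollifier. Both are straightforward once $\rho_\epsilon$ is taken smooth and strictly positive, since the $\mu$-prefactor then absorbs this $\mu$-independent quantity as $\mu \to 0$.
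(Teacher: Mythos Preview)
Your argument is correct, and the route is genuinely different from the paper's. The paper proves both limits by combining the eigenfunction equation with weak-$*$ compactness of the invariant measures $\eta_\mu$: for the low-diffusion lower bound it rescales the eigenfunction via the fourth line of \eqref{Eq:HypH}, uses the third line (the Poincar\'e-type function $\mathcal H$) to obtain $L^1$ bounds on $\psi_\mu=\mu^\beta(\p_\mu-\fint_\T\p_\mu)$, and then tests against smooth time-independent measures; for the matching upper bound it extracts a weak limit $\tilde\eta_0$ of $\{\eta_\mu\}$ and shows, by a ``bounded-below linear functional'' trick applied to \eqref{Eq:DonskerVaradhan}, that $\partial_t\tilde\eta_0=0$. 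The large-diffusion case is handled analogously, identifying the weak limit of $\eta_\mu$ as the uniform measure. Your proof, by contrast, is entirely constructive: you produce explicit competitors $\p$ for the upper bounds (via the weak-duality inequality $-\lambda_H\le\sup_{(t,x)}F(\p)$, which is immediate from the pointwise eigenequation) and explicit test measures for the lower bounds, closing the low-diffusion lower bound with a Fenchel--Young estimate rather than any information on $\p_\mu$ or $\eta_\mu$. Two incidental gains of your approach: it never invokes the third line of \eqref{Eq:HypH}, and it avoids any compactness argument. What the paper's approach buys in return is the identification of the limiting invariant measure (concentrated on $\arg\max_x\bar m$, respectively uniform), which your bounds do not directly yield.
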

\paragraph{The large heat operator limit}In the case of a large heat operator, we can actually obtain a more precise asymptotic expansion of the eigenvalue.
\begin{theorem}\label{Th:LargeHeat}
Let for any $\e>0$ and any $m\in \mathscr C^{1,2}_{\mathrm{per}}(\TT)$ $\lambda_H(m,\e)$ be the first eigenvalue of $\partial_t-\Delta+\e H(\n_x\cdot)+m$. Then 
\begin{equation*} \frac{-\lambda_H(m,\e)-\fiint_\TT m}\e \underset{\e\to 0}\rightarrow \fiint_\TT H(\n_x \psi_0)\end{equation*} where \begin{equation}\label{Eq:Lim}\begin{cases}\partial_t \psi_0-\Delta \psi_0=\fiint_\TT m -m&\text{ in }\TT\,, 
\\ \psi_0(T,\cdot)=\psi(0,\cdot)&\text{ in }\T\,, 
\\ \fiint_\TT \psi_0=0\end{cases}.\end{equation}
\end{theorem}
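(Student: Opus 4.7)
The plan is to combine the Donsker-Varadhan formulation with the convexity of $H$ to obtain matched upper and lower bounds. First I would integrate the eigenequation for $(\lambda_H(m,\e), \p_H)$ over $\TT$: the $\partial_t \p_H$ contribution vanishes by time-periodicity and the $\Delta \p_H$ contribution by integration by parts, yielding the exact identity
\[
\frac{-\lambda_H(m,\e) - \fiint_\TT m}{\e} = \fiint_\TT H(\nabla_x \p_H),
\]
where $\p_H$ is the (implicitly $\e$-dependent) eigenfunction of \eqref{Eq:MainHamiltonian} for the operator $\partial_t - \Delta + \e H(\nabla_x \cdot) + m$. The theorem thus reduces to showing $\fiint_\TT H(\nabla_x \p_H) \to \fiint_\TT H(\nabla_x \psi_0)$ as $\e \to 0$.

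For the upper bound I would introduce the corrector $\psi_1$, defined as the unique mean-zero periodic solution of $\partial_t \psi_1 - \Delta \psi_1 = \fiint H(\nabla \psi_0) - H(\nabla \psi_0)$ (well-defined since the source has zero mean), and set $\p_\e := \psi_0 + \e \psi_1$. A Taylor expansion of $H$ around $\nabla \psi_0$ (valid because $H \in \mathcal C^2$ and $\psi_0, \psi_1$ are smooth) gives the pointwise estimate
\[
\partial_t \p_\e - \Delta \p_\e + \e H(\nabla \p_\e) + m = \fiint_\TT m + \e \fiint_\TT H(\nabla \psi_0) + O(\e^2).
\]
Applying weak duality to \eqref{Eq:DonskerVaradhan} with test function $\p_\e$, and using that for any continuous $f$ one has $\max_{\eta \in \mathcal P(\TT)} \iint f\,d\eta = \sup_\TT f$, gives $-\lambda_H(m,\e) \leq \fiint m + \e \fiint H(\nabla \psi_0) + O(\e^2)$; combined with Step 1, this yields $\fiint H(\nabla \p_H) \leq \fiint H(\nabla \psi_0) + O(\e)$.

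For the matching lower bound I would exploit weak compactness and lower semicontinuity of convex integrals. The fourth line of \eqref{Eq:HypH} iteratively gives the uniform polynomial-type superlinearity $H(p) \geq c |p|^{1/\beta}$ with $1/\beta > 1$, so the uniform bound on $\fiint H(\nabla \p_H)$ forces $\{\nabla \p_H\}_\e$ to be bounded in $L^{1/\beta}(\TT)$, hence weakly precompact. Standard parabolic regularity applied to the eigenequation, whose right-hand side $-\lambda_H - m - \e H(\nabla \p_H)$ is bounded in $L^{1/\beta}$ uniformly in $\e$, then gives compactness of $\{\p_H\}_\e$ in $L^1(\TT)$. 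Along a subsequence, $\p_H \to \p^*$ in $L^1$ with $\nabla \p_H \rightharpoonup \nabla \p^*$ in $L^{1/\beta}$; passing to the distributional limit in the eigenequation (the nonlinear term $\e H(\nabla \p_H)$ vanishes in $L^1$ thanks to the $\e$ prefactor) identifies $\p^*$ as the unique mean-zero periodic solution of \eqref{Eq:Lim}, namely $\psi_0$, and uniqueness of the limit upgrades the convergence to the whole family. Since $H$ is convex by \eqref{Eq:HypH}, the functional $f \mapsto \iint H(f)$ is weakly lower semicontinuous on $L^{1/\beta}$, so $\liminf_\e \fiint H(\nabla \p_H) \geq \fiint H(\nabla \psi_0)$, closing the argument together with the upper bound.

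The main obstacle is the compactness and identification step: it is precisely the superlinearity assumption in \eqref{Eq:HypH} that secures weak compactness of $\nabla \p_H$, while the $\e$ prefactor is essential for the nonlinear term to drop out of the limit PDE, leaving a linear identification problem with unique solution $\psi_0$. Once compactness is in hand, the convexity of $H$ closes the matching lower bound via standard lower semicontinuity of convex integrands.
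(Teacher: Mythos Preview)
Your argument is correct, and the overall architecture---upper bound via a well-chosen test function in \eqref{Eq:DonskerVaradhan}, lower bound via weak convergence of $\nabla_x\p_H$ and lower semicontinuity of the convex integral---is the same as the paper's. Two points of genuine difference are worth noting. First, you open with the exact identity $\frac{-\lambda_H(m,\e)-\fiint_\TT m}{\e}=\fiint_\TT H(\nabla_x\p_H)$ obtained by integrating the eigen\-equation; the paper never writes this identity explicitly and instead carries the invariant measure $\eta_\e$ through the whole argument. Second, for the upper bound you build a two-term corrector $\p_\e=\psi_0+\e\psi_1$ so that $\partial_t\p_\e-\Delta\p_\e+\e H(\nabla_x\p_\e)+m$ is a \emph{constant} up to $O(\e^2)$, and then use $-\lambda_H\leq\sup_\TT(\cdot)$. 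The paper takes the simpler test function $\psi_0$, which only gives $-\lambda_H(m,\e)\leq\fiint_\TT m+\e\iint_\TT H(\nabla_x\psi_0)\,d\eta_\e$; it must then run a separate argument to show that the invariant measure $\eta_\e$ converges weakly to the uniform measure before the upper bound closes. Your corrector construction bypasses that step entirely, at the price of needing enough regularity on $H(\nabla_x\psi_0)$ to solve for $\psi_1$ (which holds here since $m\in\mathscr C^{1,2}_{\mathrm{per}}$ and $H\in\mathscr C^2$).

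One small imprecision: you assert that the right-hand side $-\lambda_H-m-\e H(\nabla_x\p_H)$ is bounded in $L^{1/\beta}$, but the bound $\fiint_\TT H(\nabla_x\p_H)\leq C$ only controls $H(\nabla_x\p_H)$ in $L^1$, not in $L^{1/\beta}$. This does not damage the proof, because you do not actually need strong $L^1$-compactness of $\p_H$: weak $L^{1/\beta}$-compactness of $\nabla_x\p_H$ (which you do have, from the superlinearity bound $H(p)\gtrsim|p|^{1/\beta}$) together with an $L^\infty(0;T)$ bound on $t\mapsto\fint_\T\p_H(t,\cdot)$ (obtained by integrating the eigen\-equation over $\T$, exactly as the paper does) already yields weak convergence of $\p_H$ in $L^{1/\beta}(\TT)$, which suffices to pass to the distributional limit and identify $\psi_0$.
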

This result will be used to distinguish between quadratic and non-quadratic Hamiltonians, see the next section.

\subsection{A key difference between quadratic and non-quadratic Hamiltonians}
In the upcoming sections we will investigate the monotonicity of effective Hamiltonians with respect to various parameters, and we argue that a key part in the proof of these properties is the reversibility of effective Hamiltonians in the following sense:
\begin{definition}\label{De:Reversibility} Let, for any $m\in\mathscr C^{1,2}_{\mathrm{per}}(\TT)$, $\lambda_{H,+}(m)$ be the  eigenvalue associated with $\partial_t-\Delta+H(\n_x\cdot)+m$ and $\lambda_{H,-}(m)$ denote the eigenvalue associated with $-\partial_t-\Delta+H(\n_x\cdot)+m$. We say that $H$ is reversible if, for any $m\in \mathscr C^{1,2}_{\mathrm{per}}(\TT) $, $\lambda_{H,+}(m)=\lambda_{H,-}(m)$.
\end{definition}
The reversibility of quadratic Hamiltonians is a consequence of the Hopf-Cole transform; indeed, supposing that  $H$ writes $H(p)=|p|^2$, this amounts to investigating whether the first eigenvalue $\lambda(m)$ of 
\[ \partial_tu_m-\Delta u_m=\lambda(m)u_m+mu_m\] is equal to the first eigenvalue $\lambda_-(m)$ of 
\[ -\partial_t v_m-\Delta v_m=\lambda_-(m)v_m+mv_m.\] However, this simply expresses the fact that a linear operator and its adjoint have the same spectral radius. We claim that, from the point of view of reversibility, quadratic Hamiltonians are exceptional in the following sense:
\begin{theorem}\label{Th:Reversibility} We work in one-dimension ($d=1$).
Let $H$ satisfy \eqref{Eq:HypH}, and be analytic in the sense that 
\[\forall p\in \R\,, H(p)=\sum_{k=0}^\infty a_kp^k\]and assume that for any $\e>0$  the Hamiltonian $\e H$ is reversible.  Then $H$ is quadratic: up to a multiplicative constant, $H(p)=p^2$. \end{theorem}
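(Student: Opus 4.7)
The plan is to extract, one Taylor coefficient at a time, the power series of $H$ by combining Theorem~\ref{Th:LargeHeat} with a scaling of the potential $m$. First, one observes that the proof of Theorem~\ref{Th:LargeHeat} carries over verbatim to the backward operator $-\partial_t-\Delta+\varepsilon H(\partial_x\cdot)+m$, so that for every $\varepsilon>0$
\[
-\lambda_{\varepsilon H,\pm}(m)=\fiint_\TT m+\varepsilon\fiint_\TT H\!\left(\partial_x\psi_0^{\pm,m}\right)+o(\varepsilon),
\]
where $\psi_0^{+,m}$ is the function of \eqref{Eq:Lim} and $\psi_0^{-,m}$ solves the analogous backward heat equation $-\partial_t\psi-\Delta\psi=\fiint m-m$. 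The hypothesis that $\varepsilon H$ is reversible for every $\varepsilon>0$ gives $\lambda_{\varepsilon H,+}(m)=\lambda_{\varepsilon H,-}(m)$; subtracting the two expansions, dividing by $\varepsilon$ and letting $\varepsilon\to 0^+$ produces the key identity $\fiint_\TT H(\partial_x\psi_0^{+,m})=\fiint_\TT H(\partial_x\psi_0^{-,m})$ for every $m\in\mathscr{C}^{1,2}_{\mathrm{per}}(\TT)$.

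Next, I would write $H(p)=\sum_{k\ge 0}a_kp^k$, with $a_0=a_1=0$ since $H(0)=0$ and $p=0$ is a strict minimum of $H$. The functions $\psi_0^{\pm,m}$ depend linearly on $m$, so $\psi_0^{\pm,\sigma m}=\sigma\psi_0^{\pm,m}$; substituting $m\mapsto\sigma m$ in the key identity and expanding the integrand in powers of $\sigma$ (convergent for $\sigma$ in a neighbourhood of $0$ by the analyticity of $H$ and the boundedness of $\partial_x\psi_0^{\pm,m}$), one obtains
\[
\sum_{k\ge 2}a_k\sigma^k\Delta_k(m)=0,\qquad \Delta_k(m):=\fiint_\TT\bigl(\partial_x\psi_0^{+,m}\bigr)^k-\fiint_\TT\bigl(\partial_x\psi_0^{-,m}\bigr)^k.
\]
Identifying coefficients in $\sigma$ then yields $a_k\Delta_k(m)=0$ for every $k\ge 2$ and every $m$, reducing the proof to checking that $\Delta_k\not\equiv 0$ for every $k\ge 3$.

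A direct Fourier computation confirms that $\Delta_2\equiv 0$, consistent with the automatic reversibility of the quadratic Hamiltonian. For $k=3$, I would take the explicit test $m(t,x)=\sin(\omega t)\sin(2\pi x)+\sin(2\omega t)\sin(4\pi x)$ with $\omega=2\pi/T$; the forward and backward heat equations can be solved in closed form, and the time-odd character of $m$ forces $\psi_0^{-,m}(t,x)=-\psi_0^{+,m}(-t,x)$, so $\Delta_3(m)=2\fiint_\TT(\partial_x\psi_0^{+,m})^3$, an elementary trigonometric integral that one shows is non-zero. For general $k\ge 3$, I would argue via Fourier: writing $\partial_x\psi_0^{\pm,m}=\sum_{(n,\ell)}(-2\pi i\ell)\,m_{n,\ell}/D_\pm(n,\ell)\cdot e^{2\pi i(nt/T+\ell x)}$ with $D_\pm(n,\ell)=\pm 2\pi in/T+4\pi^2\ell^2=\overline{D_\mp(n,\ell)}$, the coefficient of a Fourier monomial $\prod_{i=1}^km_{n_i,\ell_i}$ in $\Delta_k(m)$ is, up to a non-zero factor, $\overline{\prod_iD_+(n_i,\ell_i)}-\prod_iD_+(n_i,\ell_i)$, which is non-zero as soon as $\mathrm{Im}\prod_iD_+(n_i,\ell_i)\neq 0$; this can always be arranged by choosing a configuration with sufficiently generic $(n_i)$ subject to $\sum n_i=\sum\ell_i=0$.

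The main obstacle is exactly this last point: while $k=3$ reduces to a one-page explicit computation, showing $\Delta_k\not\equiv 0$ for every $k\ge 3$ and every period $T$ demands some care in Fourier-mode selection to rule out accidental cancellations among the products $\prod_iD_+(n_i,\ell_i)$. Once it is in place, $a_k=0$ for all $k\ge 3$, and one concludes $H(p)=a_2p^2$ with $a_2>0$ (positivity being forced by $H>0$ away from $0$), which is the claim.
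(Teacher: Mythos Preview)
Your reduction---apply Theorem~\ref{Th:LargeHeat} to the forward and backward operators, scale $m\mapsto\sigma m$, and match Taylor coefficients---is exactly the paper's, and it correctly brings the problem down to showing $\Delta_k\not\equiv 0$ for every $k\geq 3$. The divergence is only in this last step.

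Your Fourier scheme is viable in principle, and your $k=3$ example does go through (with your $m$ one computes $\fiint(\partial_x\psi_0^{+,m})^3=12\pi^3\fint_0^T\alpha^2\beta>0$ after solving two first-order ODEs). But the general-$k$ argument you sketch has a genuine gap, which you rightly flag: the assertion that the coefficient of $\prod_im_{n_i,\ell_i}$ in $\Delta_k$ is a nonzero multiple of $\mathrm{Im}\prod_iD_+(n_i,\ell_i)$ ignores both the symmetrisation over permutations of the $k$-tuple and, more importantly, the reality constraint $m_{-n,-\ell}=\overline{m_{n,\ell}}$, which couples each tuple to its negation and (for even $k$) admits additional ``balanced'' tuples contributing to the zero mode. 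Disentangling this uniformly in $k$ is possible but is not the one-liner you suggest.

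The paper bypasses the combinatorics. It takes $m=\bar c_j(t)e_j(x)+\delta\,c_\ell(t)e_\ell(x)$ with Laplace eigenfunctions chosen so that $\fint_{\mathbb T}(\partial_xe_j)^{k-1}\partial_xe_\ell\neq 0$ and $\lambda_j\neq\lambda_\ell$ (possible precisely because $k\geq 3$). Linearising $\Delta_k(m)=0$ at $\delta=0$ factors out the nonzero spatial integral and leaves a purely temporal identity $\fint_0^T\alpha_{j,\bar c_j}^{k-1}\alpha_{\ell,c_\ell}=\fint_0^T\beta_{j,\bar c_j}^{k-1}\beta_{\ell,c_\ell}$; a second linearisation, in $\bar c_j$ around a constant, reduces this to the bilinear relation $\fint_0^T\alpha_{j,c_j}\alpha_{\ell,c_\ell}=\fint_0^T\beta_{j,c_j}\beta_{\ell,c_\ell}$ for all $c_j,c_\ell$, where $\alpha,\beta$ solve the forward and backward scalar ODEs $\pm y'+\lambda y=c$. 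A short duality argument then forces $\alpha_{\ell,c_\ell}=\beta_{\ell,c_\ell}$ for every $c_\ell$, hence $\lambda_j=\lambda_\ell$, a contradiction. The point of the double linearisation is that $k$ disappears after the first step: one bilinear obstruction, independent of $k$, settles every $k\geq 3$ at once, with no Fourier bookkeeping in time.
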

This property justifies certain of the upcoming restrictions.

\subsection{Monotonicity with respect to parameters}
\paragraph{Increasing the frequency in time}
Let for any $\tau>0$ $\lambda_H(\tau,m)$ be the effective Hamiltonian associated with the operator $\tau\partial_t-\Delta +H(\n_x\cdot)+m$.
\begin{theorem}\label{Th:Frequency} Assume that either $H$ is reversible or that $m$ is symmetric in time ($m(T-\cdot)=m$).
Then the map $\tau\mapsto \lambda_H(\tau,m)$ is non-increasing. It is monotone decreasing \textcolor{black}{unless $m$ writes 
\[ m(t,x)=m_0(x)+h(t)\] for some periodic functions $m_0\,, h$.}\end{theorem}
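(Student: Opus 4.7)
My plan is to leverage the Donsker--Varadhan formula \eqref{Eq:DonskerVaradhan} together with the second variational principle for $\lambda_H$ furnished by the hypothesis (reversibility of $H$ or time-symmetry of $m$), namely the one coming from the backward operator. Writing
\[
F_\tau^+(\eta,\varphi):=\iint_{\TT}\bigl(\tau\partial_t\varphi-\Delta\varphi+H(\nabla_x\varphi)+m\bigr)\,d\eta,
\]
which is affine in $\tau$ for fixed $(\eta,\varphi)$, and applying the envelope theorem at the unique saddle $(\eta_H,\varphi_H)$ one gets
\[
\tfrac{d}{d\tau}\bigl(-\lambda_H(\tau,m)\bigr)=\iint_{\TT}\partial_t\varphi_H\,d\eta_H,
\]
so the monotonicity of $\tau\mapsto\lambda_H(\tau,m)$ reduces to a sign condition on this integral.

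Under either hypothesis the backward operator $-\tau\partial_t-\Delta+H(\nabla_x\cdot)+m$ has the same principal eigenvalue $\lambda_H$: under reversibility this is Definition \ref{De:Reversibility}, and for symmetric $m$ the substitution $s=T-t$ turns \eqref{Eq:MainHamiltonian} into the backward equation for the same $m$, identifying the backward eigenpair and invariant measure as the time-reflections of $(\varphi_H,\eta_H)$. Either way one obtains the companion identity $-\lambda_H=\max_\eta\min_\varphi F_\tau^-(\eta,\varphi)$ with $F_\tau^-(\eta,\varphi):=\iint(-\tau\partial_t\varphi-\Delta\varphi+H(\nabla_x\varphi)+m)\,d\eta$. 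Cross-testing $\varphi_H$ in this backward formula at $\eta=\eta_H^-$ (the backward invariant measure), and using \eqref{Eq:MainHamiltonian} to rewrite $-\Delta\varphi_H+H(\nabla_x\varphi_H)+m=-\lambda_H-\tau\partial_t\varphi_H$, yields
\[
F_\tau^-(\eta_H^-,\varphi_H)=-\lambda_H-2\tau\iint_{\TT}\partial_t\varphi_H\,d\eta_H^-.
\]
The saddle inequality $F_\tau^-(\eta_H^-,\varphi_H)\geq F_\tau^-(\eta_H^-,\varphi_H^-)=-\lambda_H$ then pins down the sign of $\iint\partial_t\varphi_H\,d\eta_H^-$; under the assumed structure this translates into the desired sign for $\iint\partial_t\varphi_H\,d\eta_H$: in the reversible (essentially quadratic, by Theorem \ref{Th:Reversibility}) case the Hopf--Cole product factorisation of the invariant measure gives $\eta_H^-=\eta_H$, while for symmetric $m$ the measure $\eta_H^-$ is the time-reflection of $\eta_H$ and a further change of variable inside the integral recovers the same sign.

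For the equality case, saturation of the above saddle inequality forces $\varphi_H$ to actually minimise the backward functional at $\eta_H^-$; by uniqueness of the saddle (Lemma \ref{Le:QuantifiedDV}) and strict convexity of $H$ from \eqref{Eq:HypH}, this rigidity forces a separated ansatz $\varphi_H(t,x)=\bar\varphi(x)+\psi(t)$. Plugging into \eqref{Eq:MainHamiltonian} and separating variables yields $m(t,x)=m_0(x)+h(t)$, and the converse ($\lambda_H$ independent of $\tau$ for such $m$) follows from an explicit Hopf--Cole type computation in which the time-dependent part $\psi(t)$ is chosen so that $\tau\psi'(t)=\bar h-h(t)$.

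\textbf{Main obstacle.} The delicate step is the transfer of the sign from $\iint\partial_t\varphi_H\,d\eta_H^-$ back to $\iint\partial_t\varphi_H\,d\eta_H$, because outside the reversible case the forward and backward invariant measures do not coincide, and the hypothesis (reversibility of $H$ or symmetry of $m$) is precisely what provides the missing link between the two. In the reversible case Theorem \ref{Th:Reversibility} tells us that $H$ is essentially quadratic, which is exactly what makes the clean Hopf--Cole identification $\eta_H^-=\eta_H$ available, whereas for symmetric $m$ one must track the $t\mapsto T-t$ reflection carefully through both saddle-point equations. Ensuring that the cross-testing inequality is tight enough to also capture the rigidity of the equality case is where the technical work lies.
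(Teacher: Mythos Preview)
Your overall architecture is the paper's: compute $\dot\lambda_H$ as $\pm\iint\partial_t\varphi_H\,d\eta_H$ via the envelope/adjoint argument, then obtain its sign by testing $\varphi_H$ in the \emph{backward} Donsker--Varadhan saddle and invoking Lemma~\ref{Le:QuantifiedDV}. Your ``cross-testing'' identity $F_\tau^-(\eta_H^-,\varphi_H)=-\lambda_H-2\tau\iint\partial_t\varphi_H\,d\eta_H^-$ is exactly the paper's manipulation $2\tau\,\partial_t\varphi_{+}=F(\varphi_+)-\overline F(\varphi_+)$ rewritten, and the equality case analysis is the same. So the route is not genuinely different.

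There is, however, a gap in your transfer step. In the reversible case you deduce $\eta_H^-=\eta_H$ by invoking Theorem~\ref{Th:Reversibility} (``reversible $\Rightarrow$ quadratic $\Rightarrow$ Hopf--Cole factorisation''). That theorem requires $d=1$, analyticity of $H$, and reversibility of $\varepsilon H$ for \emph{every} $\varepsilon>0$, none of which is assumed in Theorem~\ref{Th:Frequency}; so this detour is illegitimate. The correct (and much shorter) argument is intrinsic to Definition~\ref{De:Reversibility}: since $\lambda_{H,+}(m')=\lambda_{H,-}(m')$ for \emph{all} $m'$, and since the invariant measure is the G\^ateaux derivative of $-\lambda_H$ with respect to $m$ (differentiate \eqref{Eq:MainHamiltonian} and test against $\eta_H$), one gets $\eta_H^+=\eta_H^-$ directly. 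This is what the paper means by ``by reversibility $\eta_{+,\tau}=\eta_{-,\tau}$''.

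In the time-symmetric case your claim that ``a further change of variable inside the integral recovers the same sign'' does not do what you say. With $\eta_H^-(t,x)=\eta_H(T-t,x)$ and $\varphi_H^-(t,x)=\varphi_H(T-t,x)$, the substitution $t\mapsto T-t$ turns $\iint\partial_t\varphi_H\,d\eta_H^-$ into $-\iint\partial_t\varphi_H^-\,d\eta_H$, not into $\iint\partial_t\varphi_H\,d\eta_H$; so you have established the sign of the wrong quantity. You need an argument that actually links $\iint\partial_t\varphi_H\,d\eta_H^-$ (or $\iint\partial_t\varphi_H^-\,d\eta_H$) to $\iint\partial_t\varphi_H\,d\eta_H$; the paper achieves this only through the identification $\eta_{+,\tau}=\eta_{-,\tau}$, and your sketch does not supply an alternative.
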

In the case of quadratic hamiltonians, this theorem was obtained by \textcolor{black}{Liu}, Lou, Peng \& Zhou \cite{zbMATH07183857,zbMATH07122717} using a functional approach, which we claim is a corollary of the Donsker-Varadhan formula \eqref{Eq:DonskerVaradhan}. Our proof provides what we deem a more direct insight into the computations of \cite{zbMATH07183857,zbMATH07122717} and, we feel, places a stronger emphasis on the particular structure of reversibility. 
\begin{remark}[A conjecture]
It is unclear at this stage whether this monotonicity property holds for non-reversible hamiltonians. We leave this as an open and interesting question.
\end{remark}

An elementary result which we will use is the following consequence of the Donsker-Varadhan formula, already used implicitly as the main ingredient in \cite{zbMATH07183857,zbMATH07122717}:
\begin{lemma}\label{Le:QuantifiedDV}
Let $(\lambda_H\,, \p_H)$ be the eigenpair associated with the Hamiltonian $H$ and $\eta_H$ be the associated invariant measure. For any $\p\in \mathscr C^{1,2}(\T)$, there exists a vector field $\xi$ such that 
\begin{multline*} \iint_\TT \left(\partial_t\p-\Delta\p+H(t,x,\n_x\p)\right)d\eta_H-\lambda_H\\=\frac12\iint_\TT \n^2_{pp}H(\xi)\left[\p-\p_H,\p-\p_H\right]d\eta_H.\end{multline*} In particular,
\begin{multline*}\iint_\TT \left(\partial_t\p-\Delta\p+H(t,x,\n_x\p)\right)d\eta_H\\>\iint_\TT \left(\partial_t\p_H-\Delta\p_H+H(t,x,\n_x\p_H)\right)d\eta_H\end{multline*} unless $\p-\p_H$ is constant.
\end{lemma}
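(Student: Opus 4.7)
The plan is to reduce the identity to a pointwise convexity statement about $H$ in the $p$-variable, which is then dealt with by Taylor's formula.

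First, I would test \eqref{Eq:MainHamiltonian} against the invariant measure $\eta_H$. Since $\iint_\TT \eta_H = 1$, this yields
$$\iint_\TT \bigl(\partial_t\p_H-\Delta\p_H+H(t,x,\n_x\p_H)\bigr)\,d\eta_H=-\lambda_H,$$
so, setting $w:=\p-\p_H$, the left-hand side of the claim becomes
$$A := \iint_\TT\bigl(\partial_t w-\Delta w\bigr)\,d\eta_H+\iint_\TT\bigl[H(t,x,\n_x\p)-H(t,x,\n_x\p_H)\bigr]\,d\eta_H.$$

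Next, I would handle the linear part $\iint_\TT(\partial_t - \Delta)w \cdot \eta_H$ by integrating by parts in $(t,x)$ (discarding boundary terms by virtue of time-periodicity and working on the torus) and substituting the Fokker-Planck equation \eqref{Eq:DefInvariantMeasure}:
$$\iint_\TT(\partial_t w-\Delta w)\,\eta_H \;=\; \iint_\TT w\,\nabla\cdot\bigl(\eta_H\,\n_pH(t,x,\n_x\p_H)\bigr) \;=\; -\iint_\TT\langle\n_x w,\n_pH(t,x,\n_x\p_H)\rangle\,d\eta_H.$$
Plugging this back into $A$ yields
$$A=\iint_\TT\bigl[H(t,x,\n_x\p)-H(t,x,\n_x\p_H)-\langle\n_pH(t,x,\n_x\p_H),\n_x w\rangle\bigr]\,d\eta_H,$$
which is the integral against $\eta_H$ of the pointwise defect of first-order Taylor expansion of $H(t,x,\cdot)$ between $\n_x\p_H$ and $\n_x\p$. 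Applying Taylor's formula with Lagrange remainder pointwise produces, at each $(t,x)$, a point $\xi(t,x)$ on the segment joining $\n_x\p_H(t,x)$ and $\n_x\p(t,x)$ for which the integrand equals $\tfrac12\n^2_{pp}H(t,x,\xi)[\n_x w,\n_x w]$; this gives the claimed identity (the notation $[\p-\p_H,\p-\p_H]$ in the statement being an abuse for $[\n_x w,\n_x w]$).

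For the strict inequality, the strong maximum principle applied to the linear parabolic equation \eqref{Eq:DefInvariantMeasure} gives $\eta_H>0$ on $\TT$; the positive definiteness of $\n^2_{pp}H$ away from $p=0$ assumed in \eqref{Eq:HypH} then forces $\n_x w\equiv 0$ in the equality case, so $\p-\p_H$ is independent of $x$, i.e.\ constant in space. I expect the only delicate point to be the handling of the (at most one) value of $s\in[0,1]$ for which $\n_x\p_H+s(\n_x\p-\n_x\p_H)$ vanishes: using the integral form of the Taylor remainder rather than the Lagrange form makes this transparent, since the integrand there is a strictly positive function of $s\in[0,1]$ outside that single value whenever $\n_x w(t,x)\neq 0$.
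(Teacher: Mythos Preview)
Your proof is correct and follows essentially the same route as the paper: write $w=\p-\p_H$, use the Fokker--Planck equation \eqref{Eq:DefInvariantMeasure} satisfied by $\eta_H$ to kill the linear part $\iint(\partial_t w-\Delta w+\langle\nabla_pH(\nabla_x\p_H),\nabla_x w\rangle)\,d\eta_H$, and invoke the second-order Taylor remainder of $H$ in $p$; for the rigidity, both you and the paper appeal to $\min\eta_H>0$ (strong maximum principle) together with strict convexity of $H$ to force $\nabla_x w\equiv 0$. The only cosmetic difference is the order of operations (you integrate by parts first, then Taylor; the paper expands by Taylor first, then notes the linear piece vanishes against $\eta_H$), and you are in fact slightly more careful than the paper in flagging the possible degeneracy of $\nabla^2_{pp}H$ at $p=0$ and suggesting the integral form of the remainder to bypass it. One bookkeeping remark: the identity you write in the first display, $\iint(\partial_t\p_H-\Delta\p_H+H(\nabla_x\p_H))\,d\eta_H=-\lambda_H$, is missing the term $-\iint m\,d\eta_H$ (compare \eqref{Eq:MainHamiltonian}); this does not affect the argument since $m$ cancels in the difference $A$, but it means the constant on the left of the lemma should really be read as $\iint(\ldots+m)\,d\eta_H-(-\lambda_H)$, a harmless inconsistency already present in the lemma's statement.
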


\paragraph{Non-monotonicity with respect to the diffusion} Similar to the quadratic case, where it is due to Carr\`ere \& Nadin \cite{zbMATH07226744}, we have the following result:
\begin{theorem}\label{Th:CarrereNadin}Let, for any $\mu>0$ and any $m\in \mathscr C^{1,2}_{\mathrm{Per}}(\TT)$ $\lambda_H(m,\mu)$ be the effective Hamiltonian associated with $\partial_t+\mu(-\Delta+H(\n_x\cdot))+m$. There exists $m\in \mathscr C^{1,2}_{\mathrm{per}}(\TT)$ such that $\mu\mapsto \lambda_H(m,\mu)$ is \textcolor{black}{non-monotonic}.
\end{theorem}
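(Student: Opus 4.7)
The plan is to leverage Theorem \ref{Th:LowDiffusion2} and force the two endpoint limits of $\mu\mapsto\lambda_H(m,\mu)$ to coincide, then show the eigenvalue lies strictly below this common value on the whole open interval $(0,+\infty)$. Concretely, whenever $\fint_0^T m(t,\cdot)\,dt$ is constant in $x$, one has $\max_{x\in\T}\fint_0^T m=\fiint_\TT m$, and Theorem \ref{Th:LowDiffusion2} gives $\lambda_H(m,\mu)\to -\fiint_\TT m$ both as $\mu\to 0^+$ and as $\mu\to+\infty$. A natural candidate is $m(t,x)=g(t)h(x)$ with $g\in\mathscr C^1_{\mathrm{per}}((0;T))$ of zero mean and $h\in\mathscr C^2(\T)$ non-constant, so that $\fiint_\TT m=0$.

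To obtain $\lambda_H(m,\mu)<0$ for every $\mu>0$, I would apply the Donsker--Varadhan formula \eqref{Eq:DonskerVaradhan}---adapted to the operator $\partial_t+\mu(-\Delta+H(\n_x\cdot))+m$---to the uniform measure $\eta_{\mathrm{unif}}=1/(T|\T|)$. Periodicity in $t$ and in $x$ kills the $\partial_t\p$ and $\Delta\p$ terms after integration by parts, so the inner minimisation collapses to $\min_\p\iint_\TT(\mu H(\n_x\p)+m)\,d\eta_{\mathrm{unif}}$, which (since $H\geq 0=H(0)$) is attained at constant $\p$ and equals $\fiint_\TT m$. Uniqueness of the saddle point in \eqref{Eq:DonskerVaradhan}, strengthened by Lemma \ref{Le:QuantifiedDV}, then turns the bound $-\lambda_H(m,\mu)\geq\fiint_\TT m$ into a strict inequality except possibly when the invariant measure $\eta_H$ coincides with $\eta_{\mathrm{unif}}$.

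I expect the only non-routine step to be a rigidity lemma ruling out $\eta_H\equiv\eta_{\mathrm{unif}}$ whenever $m$ is genuinely $x$-dependent. Assuming $\eta_H$ constant, \eqref{Eq:DefInvariantMeasure} collapses at each $t$ to $\n_x\!\cdot\!(\n_pH(\n_x\p_H))=0$; pairing with $\p_H$ and integrating on $\T$ yields
\[
\int_\T \n_x\p_H\cdot\n_pH(\n_x\p_H)=0.
\]
The convexity inequality $p\cdot\n_pH(p)\geq H(p)-H(0)=H(p)$ built into \eqref{Eq:HypH}, combined with the strict positivity of $H$ off the origin, then forces $\n_x\p_H\equiv 0$. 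Plugging this back into \eqref{Eq:MainHamiltonian} gives $m(t,x)=-\lambda_H-\partial_t\p_H(t)$, which is $x$-independent---contradicting the choice $m=gh$ with $h$ non-constant.

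Combining the three steps, $\lambda_H(m,\mu)<-\fiint_\TT m=0$ for every $\mu>0$ while both endpoint limits at $\mu\to 0^+$ and $\mu\to+\infty$ are equal to $0$; no monotone function can stay strictly below both of its limits, so $\mu\mapsto\lambda_H(m,\mu)$ is non-monotonic, proving the theorem.
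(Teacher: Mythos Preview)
Your proof is correct and follows a genuinely different route from the paper's. Both arguments pick $m$ with $\fint_0^T m(t,\cdot)\,dt\equiv 0$ so that, by Theorem~\ref{Th:LowDiffusion2}, the two endpoint limits $\mu\to 0^+$ and $\mu\to+\infty$ of $\lambda_H(m,\mu)$ equal $0$. The difference lies in how one produces a value of $\mu$ with $\lambda_H(m,\mu)<0$. The paper invokes Theorem~\ref{Th:BeltramoHess} (itself based on the optimal-control formulation~\eqref{Eq:OptimalControl}): under the extra hypothesis $\int_0^T\max_x m_0>0$ one gets $\lambda_H(\tfrac1\e m_0,1)\to-\infty$, and then freezes a suitable scale. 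You bypass Theorem~\ref{Th:BeltramoHess} entirely: testing~\eqref{Eq:DonskerVaradhan} against the uniform measure and running a rigidity argument, you obtain $\lambda_H(m,\mu)<0$ for \emph{every} $\mu>0$ whenever $m$ genuinely depends on $x$. This is more elementary, gives a stronger intermediate conclusion, and works for any product $m=g(t)h(x)$ with $\fint g=0$, $g\not\equiv 0$, $h$ non-constant, without any scaling.

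Two small remarks on exposition. First, the step ``equality forces $\eta_H=\eta_{\mathrm{unif}}$'' is correct (if $-\lambda_H=\fiint m$ then, since $F(\p_H)\equiv -\lambda_H$, the pair $(\eta_{\mathrm{unif}},\p_H)$ is itself a saddle point of~\eqref{Eq:DonskerVaradhan}, and uniqueness concludes), but it can be shortened: simply integrate the eigenequation $\lambda_H+\partial_t\p_H+\mu(-\Delta\p_H+H(\nabla_x\p_H))=-m$ over $\TT$ to get $\mu\fiint_\TT H(\nabla_x\p_H)=0$, hence $\nabla_x\p_H\equiv 0$ and $m$ is $x$-independent---no detour through the invariant-measure equation is needed. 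Second, make explicit that $g\not\equiv 0$, so that $m$ does depend on~$x$.
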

As in \cite{zbMATH07226744} the core ingredient of the proof is Theorem \ref{Th:BeltramoHess}.

\subsection{An example of qualitative property for elliptic problems with advection}
In this section, we  consider an elliptic problems with advection, which can be seen as a generalisation of the parabolic case. Throughout, we let $A\in \mathscr C^2(\T)$ be a smooth vector field. 
\begin{definition}\label{De:InvariantMeasureA}
An $A$-invariant measure is a measure $\eta\in \mathcal P(\T)$ such that
\[ \forall \phi\in \mathscr C^2(\T)\,, \int_\T \langle A,\n\phi\rangle d\eta=0\] or, alternatively, such that $-\nabla\cdot(A\eta)=0$ in the sense of distribution. The set of $A$-invariant measures is denoted by $\mathcal P_A(\T)$.
\end{definition}
Our main assumption is the following:
\begin{equation}\label{Eq:AssumptionVectorField}
\begin{cases}
\text{There exists a smooth positive measure $\eta_A\in \mathcal P_A(\T)$}\,, 
\\\text{The set of $\mathscr C^2(\T)$ $A$-invariant measures is dense for the weak-$*$ topology in $\mathcal P_A(\T)$.}
\end{cases}
\end{equation}
The first part of \eqref{Eq:AssumptionVectorField} is satisfied for divergence-free vector fields. Some sufficient conditions to ensure the validity of the first point (typically, if $A$ is divergence-free) were studied, we refer to \cite{zbMATH07147349}.

\paragraph{The low-diffusivity asymptotics}
We let, for any $\e>0$, $(\lambda_H(\e,A),\p_\e)$ be the eigenpair associated with the operator $\e(-\Delta+H(\n_x))+\langle A,\n_x\cdot\rangle+m$.
\begin{theorem}\label{Th:LowDiffusivityAdvection} \textcolor{black}{Under assumption \eqref{Eq:AssumptionVectorField},
there holds
\begin{equation}\label{Eq:LowDiffusivityAdvection}
\lambda_H(\e,A)\underset{\e\to 0}\rightarrow -\max_{\eta\in \mathcal P_A(\T)}\int_\T md\eta.\end{equation}}
\end{theorem}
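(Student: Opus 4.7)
}
The plan is to combine the elliptic analogues of the Donsker-Varadhan and optimal-control variational formulations with Fenchel-Young duality between $H$ and its Legendre transform $L$, and with the density assumption \eqref{Eq:AssumptionVectorField}. Set $M:=\max_{\eta\in \mathcal P_A(\T)}\int_\T m\,d\eta$, the maximum being attained by weak-$*$ compactness of $\mathcal P_A(\T)$ and continuity of $m$. I would first establish, by verbatim adaptations of the arguments behind \eqref{Eq:DonskerVaradhan} and \eqref{Eq:OptimalControl}, the formulae
\[ -\lambda_H(\e,A)=\max_{\eta\in \mathcal P(\T)}\min_{\p\in \mathscr C^2(\T)}\int_\T \left(-\e\Delta\p+\e H(\n_x\p)+\langle A,\n_x\p\rangle+m\right)d\eta \]
and its dual
\[ -\lambda_H(\e,A)=\max_{\alpha\in L^\infty(\T;\R^d)}\int_\T \left(m-\e L\bigl(\tfrac{\alpha-A}{\e}\bigr)\right)d\eta_\alpha^\e, \]
where $\eta_\alpha^\e$ is the unique positive smooth solution of $-\e\Delta\eta-\nabla\cdot(\alpha\eta)=0$ with $\int_\T\eta=1$.

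For the lower bound $\liminf_{\e\to 0}(-\lambda_H(\e,A))\geq M$, I would pick any positive $\eta\in \mathcal P_A(\T)\cap \mathscr C^2(\T)$. Integration by parts combined with $\nabla\cdot(A\eta)=0$ reduces the inner min-max to
\[ -\lambda_H(\e,A)\geq \int_\T m\,d\eta+\e\min_{\p}\int_\T \left(\left\langle \n_x\p,\tfrac{\n_x\eta}{\eta}\right\rangle+H(\n_x\p)\right)d\eta. \]
Applying the Fenchel-Young inequality in the form $\langle a,b\rangle+H(a)\geq -L(-b)$ with $a=\n_x\p$, $b=\n_x\eta/\eta$ gives $-\lambda_H(\e,A)\geq \int_\T m\,d\eta-\e\int_\T L(-\n_x\eta/\eta)d\eta$, and the second integral is finite since $\eta$ is smooth and positive. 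Letting $\e\to 0$ and then approximating any maximizer of $M$ in $\mathcal P_A(\T)$ by elements of $\mathcal P_A(\T)\cap \mathscr C^2(\T)$ (using the density in \eqref{Eq:AssumptionVectorField}, and mixing with $\eta_A$ to ensure positivity) yields $M$.

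For the matching upper bound, I would take an optimizer $\alpha_\e=A+\e\beta_\e$ in the dual formula and set $\eta_\e:=\eta_{\alpha_\e}^\e$. Since $L\geq 0$, $-\lambda_H(\e,A)\leq \int_\T m\,d\eta_\e$; the easy choice $\alpha=A$ gives $\lambda_H(\e,A)\leq \|m\|_{L^\infty}$, and hence the a priori cost bound $\e\int_\T L(\beta_\e)d\eta_\e\leq 2\|m\|_{L^\infty}$. Extracting $\eta_\e\rightharpoonup \eta^*$ weakly-$*$ along a subsequence, it remains to verify $\eta^*\in \mathcal P_A(\T)$: testing the invariant-measure equation against $\phi\in \mathscr C^2(\T)$ gives
\[ \int_\T \langle \n_x\phi,A\rangle d\eta_\e=\e\int_\T \Delta\phi\,d\eta_\e-\e\int_\T \langle \n_x\phi,\beta_\e\rangle d\eta_\e, \]
and the superlinearity of $L$ (for every $R>0$, $L\geq R|\cdot|-C_R$) combined with the cost bound forces $\e\int_\T |\beta_\e|d\eta_\e\leq 2\|m\|_{L^\infty}/R+C_R\e/R$. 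Sending $\e\to 0$ then $R\to\infty$ produces $\int_\T\langle \n_x\phi,A\rangle d\eta^*=0$, so $\eta^*\in \mathcal P_A(\T)$ and $\limsup_{\e\to 0}(-\lambda_H(\e,A))\leq \int_\T m\,d\eta^*\leq M$. The hard part will be this last step: taming the degenerate correction $\e\beta_\e$ in the drift of $\eta_\e$ and showing that any weak-$*$ limit point of $\eta_\e$ lies in $\mathcal P_A(\T)$, a task for which the coercivity/superlinearity built into \eqref{Eq:HypH} is essential.
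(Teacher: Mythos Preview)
Your argument is correct and reaches the same conclusion, but the two halves are organised differently from the paper's proof, and it is worth spelling out the trade-offs.

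\textbf{Lower bound $\liminf_{\e\to0}(-\lambda_H(\e,A))\geq M$.} Your Fenchel--Young estimate
\[
-\lambda_H(\e,A)\geq \int_\T m\,d\eta-\e\int_\T L\!\left(-\frac{\n_x\eta}{\eta}\right)d\eta
\]
for smooth positive $\eta\in\mathcal P_A(\T)$ is more direct than the paper's route. The paper instead works on the eigenfunction: it sets $\psi_\e:=\e^{\beta}(\p_\e-\fint_\T\p_\e)$, uses the scaling hypothesis $aH(p)\geq H(a^\beta p)$ and the $\mathcal H$-coercivity in \eqref{Eq:HypH} to show $\{\psi_\e\}$ is bounded in $L^1$, and then passes to the limit in
\[
-\e^{1-\beta}\int_\T\psi_\e\Delta\eta+\int_\T H(\n_x\psi_\e)d\eta+\int_\T m\,d\eta\leq-\lambda(\e).
\]
Your approach bypasses the eigenfunction entirely and does not rely on the third and fourth items of \eqref{Eq:HypH}; convexity and finiteness of $L$ suffice. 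The mixing trick $(1-\delta)\eta+\delta\eta_A$ to force positivity is exactly what is needed to make $\n_x\eta/\eta$ bounded.

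\textbf{Upper bound $\limsup_{\e\to0}(-\lambda_H(\e,A))\leq M$.} Here the paper's argument is shorter than yours. Both proofs identify a weak-$*$ limit $\eta^*$ of the invariant measures and show $\eta^*\in\mathcal P_A(\T)$, but the paper extracts this directly from the Donsker--Varadhan inequality: passing to the limit in $-\lambda(\e)\leq\int_\T(\e(-\Delta f+H(\n_xf))+\langle A,\n_xf\rangle+m)d\eta_\e$ gives $-\lambda_0\leq\int_\T\langle A,\n_xf\rangle d\eta^*+\int_\T m\,d\eta^*$ for all $f$, and a linear functional bounded below must vanish. This avoids the cost bound $\e\int L(\beta_\e)d\eta_\e\leq 2\Vert m\Vert_{L^\infty}$ and the superlinearity estimate $\e\int|\beta_\e|d\eta_\e\to0$ that you labelled as the hard part; in fact that step becomes unnecessary. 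Your optimal-control argument is nonetheless sound (the two-parameter limit $\e\to0$, $R\to\infty$ works exactly as you describe), and has the merit of giving an explicit quantitative control on how far $\eta_\e$ is from $\mathcal P_A(\T)$.
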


\paragraph{Some comments} When $A=0$ and $m$ is independent of time, we recover Theorem \ref{Th:LowDiffusion2}. 
A case of special interest is that of shear flows that is, $A(x,y)=(u(y),0)$, where the following result holds \cite[Theorem 1.1]{Liu}:
\[ \lim_{\e\to 0}\lambda_H(A,\e)=-\max\left(\max_{y \in \T\,, u(y)\neq 0} \fint_\T m(x,y)dx\,, \max_{(x,y)\,, x\in \T y\,, u(y)=0}m(x,y)\right).\] \textcolor{black}{It is  unclear and unlikely that  assumption \eqref{Eq:AssumptionVectorField} is  satisfied in this form. Whether our approach could work in this case is an interesting question that we leave open for future research}. Another case that is not covered by our result is that of gradient-like vector field $A$, typically $A=\n f$ for a smooth function $f$. In this case, it is not clear that the entire sequence $\lambda_H(\e,A)$ converges as $\e\to 0$; we refer for more details to \cite{bai2023counterexampleprincipaleigenvalueelliptic}. As noted in \cite{bai2023counterexampleprincipaleigenvalueelliptic}, knowing the specific structure of the vector field that could guarantee the convergence of the eigenvalues is a question of great interest, that we leave open as the topic of future research.

\subsection{Related literature}\label{Se:Biblio}
The existence of an eigenpair $(\lambda_H,\p_H)$ and its use in the study of long-time asymptotics of Hamilton-Jacobi equations has been studied extensively in the first and second-order case (that is, with or without diffusion), starting with the seminal paper \cite{LionsPapanicolaouVaradhan}. We refer for instance to \cite{Barles1985,zbMATH02130241,Chasseigne_2019,contreras2013weak,Evans:2001aa,zbMATH06797741,zbMATH05177907,Namah_1999,zbMATH01595998,zbMATH01851000} for a study of both cases (observe that in the first-order case the long-time convergence of the associated Hamilton-Jacobi equation is not true in the time-periodic case). A particularly important reference for us is the article of Barles \& Souganidis \cite{zbMATH01578865}. Regarding the qualitative properties of eigenvalues in the case of quadratic Hamiltonians, that is, when $\lambda_H$ can be interpreted as the first eigenvalue of a linear operator (see \eqref{Eq:Main}), the interest in such qualitative properties lies in the interpretation of $\lambda_H$ as a survival criterion for a population living in a domain $\T$ with a time-periodic resources distribution $m$. We refer to \cite[Chapter 2]{zbMATH07668634} for this interpretation. Starting with the works of Beltramo \& Hess (see \cite{zbMATH00049232} for a comprehensive survey, as well as \cite{zbMATH01028210,zbMATH01525790} for more recent references), a great deal of energy was devoted to understanding the influence of the frequency in time of the equation \cite{zbMATH01579419}, of the intensity of the advection field \cite{zbMATH02196652}, on the impact of diffusion \cite{zbMATH07226744}...Several questions remained open after these works, and a breakthrough into several of them was obtained by Liu, Lou, Peng \& Zhou who, first of all, established a saddle-point like formula to derive numerous monotonicity properties and, second, investigated a large number of limit behaviours for such eigenvalue problems. We refer to \cite{zbMATH07668634,zbMATH07183857,zbMATH07122717} and the references therein. Observe that the present article offers generalisation of their results, with significantly different approaches that we believe sheds new light on their central contributions.

\section{Basic properties: proofs of Theorems \ref{Th:Basic}-\ref{Th:BeltramoHess}}
\begin{proof}[Proof of Theorem \ref{Th:Basic}] For the sake of notational convenience, we write $(\lambda,\p)$ for $(\lambda_H\,, \p_H)$. To establish the left inequality, let $(t^*,x^*)$ be a point of minimum of  $\p$. In particular, using $\eta=\delta_{(t^*,x^*)}$ in \eqref{Eq:DonskerVaradhan}, we obtain $m(t^*,x^*)\geq -\lambda$, yielding the left-hand side inequality. To obtain the right-hand side inequality, let $(\xi,\overline \p)$ be the eigenpair attached with $-\Delta+H(\n_x\cdot)+\fint_0^Tm(t,\cdot)dt$ and let $\overline \eta$ be the associated invariant measure. By the Jensen inequality, since $H$ is convex and $\overline\eta$ does not depend on $t$,  $\int_\T\fint_0^TH(\n_x\p)d\overline\eta\geq \int_\T H\left(\fint_0^T \n\p_x\right)d\overline\eta$. Thus, applying \eqref{Eq:DonskerVaradhan} twice, if we let $\tilde\p:=\fint_0^T \p(t,\cdot)dt$, $\tilde m=\fint_0^T m(t,\cdot)dt$ and introduce the probability measure $\tilde\eta=\frac1T\overline\eta$ on $\TT$ we obtain 
\[ -\xi\leq \int_\T (-\Delta \tilde\p+H(\n_x\tilde\p)+\tilde m)d\overline\eta\leq \iint_\TT (\partial_t\p-\Delta\p+H(\n_x\p)+m)d\tilde\eta\leq-\lambda.\] This concludes the proof.

\end{proof}

\begin{proof}[Proof of Theorem \ref{Th:BeltramoHess}] We use the following fact from \cite[Proof of Proposition 15.4]{zbMATH00049232}: from assumption \eqref{Eq:AssumptionBeltramoHess} there exists a smooth periodic curve $\gamma:[0;T]\to \T$ such that $\fint_0^T m(t,\gamma(t))dt>0$. Let $\overline\eta_\gamma$ be the measure induced by the graph of $\gamma$: for any continuous function $f$, $\langle f,\overline\eta_\gamma\rangle:=\fint_0^T f(t,\gamma(t))dt$. For a fixed, smooth, positive symmetric convolution kernel $\rho\in \mathscr C^{\infty,\infty}_{\mathrm{per}}(\TT)$ we let, for any $\delta>0$, $\rho_\delta:=\frac1{\delta^{d+1}}\rho(\frac{t}\delta,\frac{x}{\delta})$ and we define $\overline\eta_{\gamma,\delta}:=\overline\eta_\gamma*\rho_\delta.$ \textcolor{black}{This is to be understood as the periodic convolution product between a measure and a smooth function, defined as 
\[\overline\eta_{\gamma,\delta}:(t,x)\mapsto \iint_{\TT}\rho_\delta(t-s,x-y)d\overline\eta_\gamma(s,y)\] where the functions are extended by periodicity.}  As $\overline\eta_{\gamma,\delta}\underset{\delta\to0}\rightharpoonup \overline\eta_\gamma$ weakly in the sense of measures, for $\delta$ small enough there holds 
\[ \iint_\TT md\overline\eta_{\gamma,\delta}>0.\] Furthermore, $\overline\eta_{\gamma,\delta}$ is a smooth, positive function. Fix such a $\delta$. Let, for any $t\in [0;T]$, $\beta$ be the unique solution of 
\[-\nabla\cdot(\overline\eta_{\gamma,\delta}\n_x \beta)=\partial_t\overline\eta_{\gamma,\delta}+\Delta \overline\eta_{\gamma,\delta}.\]Defining $\alpha_\delta:=\n_x\beta$ it follows from elliptic regularity that $\alpha_\delta\in L^\infty_{\mathrm{per}}(0,T;\R^d)$, and, by definition 
\[ -\partial_t\overline\eta_{\gamma,\delta}-\Delta\overline\eta_{\gamma,\delta}-\nabla_x\cdot(\overline\eta_{\gamma,\delta}\alpha_\delta)=0.\] From \eqref{Eq:OptimalControl} we deduce that 
\[ -\e\lambda_H(\e)\geq \iint_\TT\left(-\e L(\alpha_\delta)+m\right)d\overline\eta_{\gamma,\delta}.\] Passing to the limit $\e\to 0$ yields $\lim\inf_{\e\to 0}(-\e\lambda_H(\e))\geq  \iint_\TT md\overline\eta_{\gamma,\delta}>0,$ whence the conclusion.
\end{proof}

\section{Scaling limits: proofs of Theorems \ref{Th:AsymptoticsFrequency}-\ref{Th:LowDiffusion2}-\ref{Th:LargeHeat}}
\begin{proof}[Proof of Theorem \ref{Th:AsymptoticsFrequency}]
For the sake of notational simplicity we let, for any $\tau>0$, $(\lambda(\tau),\p_\tau)$ be the eigenpair attached with $\tau\partial_t-\Delta+H(\n_x\cdot)+m$ and $\eta_\tau$ be the associated invariant measure.

We begin with the large frequency limit $\tau\to \infty$.  We let $(\overline\p,\xi)$ be the eigenpair attached with $-\Delta+H(\n_x\cdot)+\fint_0^T m(t,\cdot)dt$ and $\overline\eta$ be the associated invariant measure (in particular, $\overline\eta$ is independent of time).
 From Theorem \ref{Th:Basic} there holds $-\xi\leq -\lambda(\tau)$ for any $\tau>0$.   
 We now prove that $-\xi\geq -\lim_{\tau\to \infty}\lambda(\tau)$. Let $\eta_\infty$ be a closure point (for the weak-$*$ topology on measures) of $\{\eta_\tau\}_{\tau\to \infty}$ and $\lambda_\infty$ be a closure point of $\{\lambda(\tau)\}_{\tau \to \infty}$. Dividing \eqref{Eq:DonskerVaradhan} by $\tau$ and passing to the limit $\tau \to \infty$ we obtain that for any $f\in \mathscr C^{1,2}_{\mathrm{per}}(\TT)$, 
\[ 0\leq \iint_\TT (\partial_t f)d\eta_\infty.\] In other words, the map $\mathscr C^{1,2}_{\mathrm{per}}(\TT)\ni f\mapsto \iint_\TT \partial_t f d\eta_\infty$ is bounded from below and is thus identically zero. Consequently, in the sense of distributions, 
\[ \partial_t \eta_\infty=0.\] Furthermore, using $\phi=\overline\p$ as a test function in \eqref{Eq:DonskerVaradhan}, we have 
\[ -\lambda(\tau)\leq-\xi+ \iint_\TT \left(m(t,x)-\fint_0^T m(s,x)ds\right)d\eta_\tau(t,x).\] Passing to the limit  provides  (as $\iint_\TT ( m(t,x)-\fint_0^T m(s,x)ds)d\eta_\infty=0$)
\[ \lim_{\tau\to\infty}(-\lambda(\tau))\leq -\xi\] and the proof of \eqref{Eq:LargeFrequency} is concluded.

We continue with the low frequency limit. For any $t\in (0;T)$, we let $(\xi(t),\p_t)$ be the eigenpair attached to $-\Delta +H(\n_x\cdot)+m(t,\cdot)$ and $\eta_t$ be the associated invariant measure. First of all, define the test function $\phi:(t,x)\mapsto \p_t(x)$. Using $\phi$ in \eqref{Eq:DonskerVaradhan} we obtain 
\[ -\lambda(\tau)\leq \tau\iint_\TT \partial_t\phi d\eta_\tau-\fint_0^T \xi(t)dt\Rightarrow -\lim_{\tau\to 0}\lambda(\tau)\leq-\fint_0^T\xi(t)dt.\]We prove the converse inequality.
 As a consequence of the regularity of $m$, $(t,x)\mapsto \eta_t(x)$ lies in $\mathscr C^{1,2}_{\mathrm{per}}(\TT)$. Using the measure $\overline\eta$ defined as $\iint_\TT fd\overline\eta:=\fint_0^T \left(\int_\T f(t,x)d\eta_t(x)\right)dt$ in \eqref{Eq:DonskerVaradhan} we obtain 
 \[ -\iint_\TT (\tau\p_\tau)\partial_t\overline\eta+\fint_0^T\left(\int_\T (-\Delta \p_\tau+H(\n_x\p_\tau)+m(t,\cdot))d\eta_t(x)\right)dt\leq -\lambda(\tau)\] so that
 \begin{equation}\label{Eq:Sarajevo} -\iint_\TT (\tau\p_\tau)\partial_t\overline\eta +\fint_0^T \xi(t)dt\leq -\lambda(\tau).\end{equation} As $\iint_\TT H(\n_x\p_\tau)$ is uniformly bounded \textcolor{black}{as $\tau\to 0$}, we deduce from \eqref{Eq:HypH} that $\iint_\TT H(\tau\n_x\p)\underset{\tau\to 0}\rightarrow 0$. If we set $\omega_\tau:=\tau\p_\tau$, using the function $\mathcal H$ of assumption \eqref{Eq:HypH} and its convexity this implies $\mathcal H(\fint_0^T \Vert  \omega_\tau(t,\cdot)-\fint_\T \omega_\tau(t,\cdot)\Vert_{L^1(\T)})\leq \fint_0^T \mathcal H(\Vert \omega_\tau(t,\cdot)-\fint_\T \omega_\tau(t,\cdot)\Vert_{L^1(\T)})\underset{\tau\to 0}\rightarrow 0$. From the coercivity of $\mathcal H$ we deduce that $\fint_0^T \Vert  \omega_\tau(t,\cdot)-\fint_\T \omega_\tau(t,\cdot)\Vert_{L^1(\T)}\underset{\tau\to 0}\rightarrow 0$.  As $\frac{d}{dt}\int_\T \overline\eta(t,\cdot)=0$, we have 
 \begin{multline*}\left|\iint_\TT \omega_\tau\partial_t\overline\eta\right|=\left|\int_0^T\int_\T\left( \omega_\tau-\fint_\T \omega_\tau(t,\cdot)\right)\partial_t\overline\eta\right|\\ \leq \Vert \partial_t\overline\eta\Vert_{L^\infty}\int_0^T \left\Vert \omega_\tau(t,\cdot)-\fint_\T \omega_\tau(t,\cdot)\right\Vert_{L^1(\T)}\underset{\tau\to 0}\rightarrow 0.\end{multline*} Passing to the limit in \eqref{Eq:Sarajevo} proves $-\fint_0^T\xi(t)dt\leq -\lim_{\tau\to 0}\lambda(\tau)$. The proof is concluded.
\end{proof}

\begin{proof}[Proof of Theorem \ref{Th:LowDiffusion2}]
For the sake of notational simplicity, we let, for any $\mu>0$, $(\lambda(\mu),\p_\mu)$ be the eigenpair associated with $\partial_t+\mu(-\Delta+H(\n_x\cdot))+m$ and $\eta_\mu$ be the associated invariant measure. We begin with the low diffusivity limit $\mu\to 0$.  From \eqref{Eq:HypH}, $\mu H(\n_x\p)\geq H(\mu^\beta \n_x\p)$ for some $\beta\in(0;1)$.
Let $\psi_\mu:=\mu^{\beta} (\p_\mu-\fint_\T \p_\mu)$. Observe that integrating the equation 
\[ \partial_t\p_\mu-\Delta\p_\mu+H(\n_x\psi_\mu)+m\leq -\lambda(\mu)\] we obtain
\[ \fiint_\TT H(\n_x\psi_\mu)\leq -\lambda(\mu)-\fiint_\TT m \] whence from the Poincar\'e inequality, $\int_0^T \Vert \psi_\mu(t,\cdot)\Vert_{L^1(\T)}dt$ is uniformly bounded \textcolor{black}{ as ${\mu\to 0}$}. For any $\eta\in \mathscr C^2(\T)$ (in particular, time-independent) introduce $\tilde\eta:=(1/T)\eta\in \mathcal P(\TT)$. \eqref{Eq:DonskerVaradhan} yields
\[ -\mu^{1-\beta}\iint_\TT \psi_\mu \Delta\tilde\eta+\iint_\TT H(\n_x\psi_\mu)d\tilde\eta+\iint_\TT md\tilde\eta\leq -\lambda(\mu).\] Passing to the limit $\mu\to 0$ gives: for any $\eta\in \mathcal P(\T)\cap \mathscr C^2(\T)$, 
$\int_\T \left(\fint_0^T m\right)d\eta\leq -\lim_{\mu\to 0}\lambda(\mu).$ Consequently, 
\begin{equation}\label{Eq:Sorcerer1}\forall \eta\in \mathcal P(\T)\,,\int_\T \left(\fint_0^T m\right)d\eta\leq -\lim_{\mu\to 0}\lambda(\mu).\end{equation} On the other hand, let $\tilde\eta_0$ be a closure point (for the weak-$*$ topology on $\mathcal P(\TT)$) of $\{\eta_\mu\}_{\mu\to 0}$. For any $f\in \mathscr C^{1,2}(\TT)$, passing to the limit $\mu\to 0$ in the right-hand side of \eqref{Eq:DonskerVaradhan} proves that 
\[ \forall f\in \mathscr C^{1,2}_{\mathrm{per}}(\TT)\,, -\lim_{\mu\to 0}\lambda(\mu)\leq \iint_\TT\left(\partial_t f+m\right)d\tilde\eta_0.\] Consequently, $\mathscr C^{1,2}_{\mathrm{per}}(\TT)\ni f\mapsto \iint_\TT \partial_t fd\tilde\eta_0$ is a bounded from below linear map. It is thus identically zero, so that $\tilde\eta_0$ does not depend on $t$ and, with a slight abuse of notations, $\tilde\eta_0(t,x)=(1/T)\eta_0(x)$; in particular: 
\begin{equation}\label{Eq:Sorcerer2}
-\lim_{\mu\to 0}\lambda(\mu)\leq \int_\T \left(\fint_0^T m(t,x)dt\right)d\eta_0(x).
\end{equation}  Combined with \eqref{Eq:Sorcerer1} we obtain first that \[\int_\T \left(\fint_0^T m(t,x)dt\right)d\eta_0(x)=\max_{\eta\in \mathcal P(\T)}\int_\T \left(\fint_0^T m(t,x)dt\right)d\eta(x)=\max_{x\in \T}\fint_0^T m(t,x)dt,\] and second that $\lim_{\mu\to 0}\lambda(\mu)=- \int_\T \left(\fint_0^T m(t,x)dt\right)d\eta_0(x)$. The proof is concluded.

We continue with the large diffusivity limit $\mu\to \infty$. Using the uniform probability measure in \eqref{Eq:DonskerVaradhan} we know that for any $\mu>0$ we have $\fiint_\TT m\leq -\lambda(\mu)$. Second, let $\tilde\eta_\infty$ be a closure point (for the weak-$*$ topology on $\mathcal P(\TT)$) of $\{\eta_\mu\}_{\mu\to\infty}$. Dividing \eqref{Eq:DonskerVaradhan} by $\mu$ and passing to the limit $\mu\to \infty$ we know that, for any $f\in \mathscr C^{1,2}_{\mathrm{per}}(\TT)$ there holds $\iint_\TT\left(-\Delta f+H(\n_x f)\right)d\tilde\eta_\infty\geq 0$. From \eqref{Eq:HypH}, choosing, for any $\e>0$, $f=\e g$ with $g\in \mathscr C^{1,2}_{\mathrm{per}}(\TT)$, this yields:
\[\forall g \in \mathscr C^{1,2}_{\mathrm{per}}(\TT)\,, -\iint_\TT \Delta g d\tilde\eta_\infty\geq 0.\] This implies that $\tilde\eta_\infty$ does not depend on $x$, and we write with an abuse of notation $\tilde\eta_\infty=(1/|\T|)\eta_\infty(t)$. Then, using \eqref{Eq:DonskerVaradhan} with a $x$-independent function $\phi$ we derive, passing to the limit $\mu\to \infty$, that for any $\phi\in \mathscr C^1_{\mathrm{per}}((0;T))$, 
\[-\lim_{\mu\to\infty}\lambda(\mu)\leq \int_0^T \phi'(t)d\eta_\infty(t)+\iint_\TT md\tilde\eta_\infty.\] As a consequence, $\mathscr C^1_{\mathrm{per}}((0;T))\ni\phi\mapsto \int_0^T \phi'd\eta_\infty$ is a linear, bounded from below map. It is thus identically zero, so that $\eta_\infty$ does not depend on $t$. Thus, $\eta_\infty$ is the uniform probability measure on $[0;T]$. Finally, taking $\phi\equiv 1$ in \eqref{Eq:DonskerVaradhan} and passing to the limit $\mu\to \infty$ yields $-\lim_{\mu\to\infty}\lambda(\mu)\leq \iint_\TT md\tilde\eta_\infty=\fiint_\TT m$, and the proof is finished.

 \end{proof}
 
 \begin{proof}[Proof of Theorem \ref{Th:LargeHeat}]
 For the sake of notational simplicity we let, for any $\e>0$, $(\lambda(\e),\p_\e)$ be the eigenpair attached to $\partial_t-\Delta+\e H(\n_x\cdot)+m$ and $\eta_\e$ be the associated invariant measure. 
 \textcolor{black}{Observe that using $\p=\psi_0$, the solution of \eqref{Eq:Lim}, in \eqref{Eq:DonskerVaradhan} yields}
 \begin{equation}\label{Eq:MC}-\lambda(\e)\leq \e\iint_\TT H(\n_x\psi_0)d\eta_\e+\fiint_\TT m.\end{equation} By parabolic regularity, $\psi_0\in \mathscr C^{1,2}_{\mathrm{per}}(\TT)$ so that  $\lim_{\e\to 0}(-\lambda(\e))=\fiint_\TT m$.
 
 We let $\eta_0$ be a closure point (for the weak-$*$ topology on $\mathcal P(\TT)$) of the sequence $\{\eta_\e\}_{\e\to 0}$. For any $h\in \mathscr C^{1,2}_{\mathrm{per}}(\TT)$ with $\fiint_\TT h=0$ let $\psi_h$ be the solution of $\partial_t \psi_h-\Delta \psi_h=\fiint_\TT m- (m+h)$ endowed with periodic boundary conditions and $\fiint_\TT \psi_h=0$. \eqref{Eq:DonskerVaradhan} with $\phi=\psi_h$ gives, for any such $h$, 
 \[\lim_{\e\to 0}(-\lambda(\e))\leq -\iint_\TT h d\eta_0+\fiint_\TT m\] so that the map $\mathscr C^{1,2}_{\mathrm{per}}\cap \{f\,, \fiint_\TT f=0\}\ni h\mapsto \iint_\TT hd\eta_0$ is bounded from below. In particular, it is identically zero and thus $\eta_0$ is the uniform measure on $\mathcal P(\TT)$. Now, turning back to \eqref{Eq:MC}, we see that $\fiint_\TT H(\n_x\p_\e)$ is uniformly bounded \textcolor{black}{as $\e\to 0$}. From \eqref{Eq:HypH} this implies that $\{\p_\e\}_{\e\to 0}$ is uniformly bounded in $L^r(0,T;W^{1,r}(\T))\subset (L^q(0,T;W^{1,q}(\T)))'$ for some $r,q>1$. Indeed, integrating the equation in $x$ yields that $t\mapsto \fint_\T \p_\e(t,\cdot)$ is (uniformly in $\e$) $BV$ in $t$, whence $\sup_{t\in[0;T]}|\fint_\T\p_\e|$ is uniformly bounded. This implies the weak-$*$ convergence (for the weak topology on $(L^q(0,T;W^{1,q}(\T)))'$) of $\{\p_\e\}_{\e\to 0}$ to a $\p_0$. Passing to the limit in the equation we deduce that for any $\eta\in \mathscr C^{1,2}_{\mathrm{per}}(\TT)\cap \mathcal P(\TT)$ there holds $\iint_\T \p_0(-\partial_t\eta-\Delta \eta)+\iint_\TT (m-\fiint_\TT m)\eta=0$ whence $\p_0=\psi_0$. Furthermore, $\{\n_x\p_\e\}_{\e\to 0}$ is uniformly bounded in 
 $L^r(\TT)$ and converges weakly (for the weak topology on $L^r(\TT)$) to some $\hat\p_0$. Using once again regular enough test functions in $L^r(\TT)$, we deduce that $\hat \p_0=\psi_0$. 
  As $H$ is convex, this implies
 $\fiint_\TT H(\n_x\psi_0)\leq \underset{\e\to 0}{\lim\inf}\fiint_\TT H(\n_x\p_\e)$ so that
 \begin{multline*} \fiint_\TT H(\n_x\psi_0)\leq \underset{\e\to0}{\lim\inf}\fiint_\TT H(\n_x\p_\e)\leq\lim_{\e\to 0} \frac{-\lambda(\e)-\fiint_\TT m}{\e}\\\leq \lim_{\e\to 0}\iint_\TT H(\n_x\psi_0)d\eta_\e=\fiint_\TT H(\n_x\psi_0).\end{multline*}

  \end{proof}

 \section{Reversibility of Hamiltonians: proof of Theorem \ref{Th:Reversibility}}
 
 \begin{proof}[Proof of Theorem \ref{Th:Reversibility}]
 Assume that, for any $\e>0$, $\e H$ is reversible. From Theorem \ref{Th:LargeHeat} this implies the following: for any $m\in \mathscr C^{1,2}(\TTo)$ with $\fiint_\TTo m=0$, if $\psi_m\,, \theta_m$ denote the solutions of 
 \[\begin{cases}\partial_t\psi_m-\Delta \psi_m=m&\text{ in }\TTo\,, 
 \\ \psi_m(T,\cdot)=\psi_m(0,\cdot)&\text{ in }\mathbb T\,, 
 \\ \fiint_\TTo \psi_m=0\end{cases}\text{ and }\begin{cases}-\partial_t\theta_m-\Delta \theta_m=m&\text{ in }\TTo\,, 
 \\ \theta_m(T,\cdot)=\theta_m(0,\cdot)&\text{ in }\mathbb T\,, 
 \\ \fiint_\TTo \theta_m=0\end{cases}\] then 
 \[ \fiint_\TTo H(\n_x\psi_m)=\fiint_\TTo H(\n_x\theta_m).\] Indeed, an immediate adaptation of the proof of Theorem \ref{Th:LargeHeat} show that \[\frac{-\lambda_{H,-}(\e)-\fiint_\TT m}\e\underset{\e\to 0}\to\fiint_\TT H(\n_x\theta_m)
.\]Replacing $m$ with $\delta m$ for any $\delta>0$ and observing that $\psi_{\delta m}=\delta\psi_m\,, \theta_{\delta m}=\delta\theta_m$ this implies that 
\[ \sum_{k=0}^\infty a_k\delta^k \fiint_\TT (\partial_x\psi_m)^k=\sum_{k=0}^\infty a_k\delta^k \fiint_\TT (\partial_x\theta_m)^k\] whence:
\[\forall k\in \N\text{ such that } a_k\neq 0\,, \forall m\in \mathscr C^{1,2}_{\mathrm{per}}(\TT)\,, \fiint_\TT (\partial _x\psi_m)^k=\fiint_\TT (\partial_x \theta_m)^k.\] Naturally, the proof is concluded if we can prove the following: 
\begin{equation}\label{Eq:Akimbo} \forall k\in \N\setminus\{0,1,2\}\,, \exists m\in \mathscr C^{1,2}_{\mathrm{per}}(\TT)\, \fiint_\TT (\partial_x\psi_m)^k\neq \fiint_\TT (\partial_x\theta_m)^k.\end{equation}

To prove \eqref{Eq:Akimbo} let $k\in \N\,, k>2$. Let $(e_j,\xi_j)$ be the eigenpairs of the Laplacian on $\mathbb T$, that is, $-\Delta e_j=\xi_j e_j$ with $\{\xi_j\}_{j\in \N}$ non-decreasing and going to $+\infty$. Similarly, for any $c\in \mathscr C^0_{\mathrm{per}}((0;T))$ and any $j\in \N\setminus\{0\}$ let $\alpha_{j,c}$, resp. $\beta_{j,c}$, be the unique solution of 
\[\begin{cases}\alpha_{j,c}'+\lambda_j\alpha_{j,c}=c&\text{ in }(0;T)\,, 
\\ \alpha_{j,c}(T)=\alpha_{j,c}(0),\end{cases}\text{ resp. }\begin{cases}-\beta_{j,c}'+\lambda_j\beta_{j,c}=c&\text{ in }(0;T)\,, 
\\ \beta_{j,c}(T)=\beta_{j,c}(0).\end{cases}\] If $m$ writes $m(t,x)=\sum_{j=0}^\infty c_j(t)e_j(x)$, it follows that 
\[ \psi_m=\sum_{j=0}^\infty\alpha_{j,c_j}e_j\,, \theta_m=\sum_{j=0}^\infty \beta_{j,c_j}e_j.\]  We now fix $j\neq \ell$ such that $\fint_\T (\partial_x e_j)^{k-1}\partial_xe_\ell\neq 0$ and $\lambda_j\neq \lambda_\ell$ (this is always possible as $k>2$) and we choose $m=\bar c_j(t)e_j+\delta c_\ell(t)e_\ell$, where $\bar c_j\,,c_\ell\in \mathscr C^{1}_{\mathrm{per}}((0;T))$ are functions to be determined (but fixed) and $\delta>0$ is a positive parameter. If for any $m$ $\fiint_\TT (\partial_x\psi_m)^k=\fiint_\TT(\partial_x\theta_m)^k$ it follows, linearising around $\delta=0$, that $\fiint_\TT (\partial_xe_j)^{k-1}\partial_xe_\ell \alpha_{j,\bar c_j}^{k-1}\alpha_{\ell,c_\ell}=\fiint_\TT (\partial_xe_j)^{k-1}\partial_xe_\ell \beta_{j,\bar c_j}^{k-1}\beta_{\ell,c_\ell}$ whence we deduce that for any $\bar c_j\,, c_\ell\in \mathscr C^{1}_{\mathrm{per}}((0;T))$,
\[ \fint_0^T \alpha_{j,\bar c_j}^{k-1}\alpha_{\ell,c_\ell}=\fint_0^T \beta_{j,\bar c_j}^{k-1}\beta_{\ell,c_\ell}.\] Similarly, letting $\bar c_j(t)=1+\tau  c_j(t)$ for $\tau>0$, linearising the previous identity around $\tau=0$ yields that for all $c_j\,, c_\ell$, there holds
\[\fint_0^T \alpha_{j,c_j}\alpha_{\ell,c_\ell}=\fint_0^T \beta_{j,c_j}\beta_{\ell,c_\ell}.\] Let us prove that this is impossible when $\lambda_j\neq \lambda_\ell$. Introduce the functions $\eta_{\ell,j}\,, \xi_{\ell,j}$ as the solutions of 
\[
\begin{cases}
-\eta_{\ell,j}'+\lambda_j\eta_{\ell,j}=\alpha_{\ell,c_\ell}\,, 
\\ \xi_{\ell,j}'+\lambda_j\xi_{\ell,j}=\beta_{\ell,c_\ell}.\end{cases}\] It follows that for any $c_j$ we have 
$ \fint_0^T \eta_{\ell,j}c_j=\fint_0^T \xi_{\ell,j}c_j$, whence 
$\eta_{\ell,j}=\xi_{\ell,j}$ for any $c_\ell$. Thus, we deduce that 
\[ -\eta_{\ell,j}''+\lambda_j^2\eta_{\ell,j}=\left(\frac{d}{dt}+\lambda_j\right)\alpha_{\ell,c_\ell}=c_\ell+(\lambda_j-\lambda_\ell)\alpha_{\ell,c_\ell}\] and, similarly, 
\[-\eta_{\ell,j}''+\lambda_j^2\eta_{\ell,j}=-\xi_{\ell,j}''+\lambda_j^2\xi_{\ell,j}=c_\ell+(\lambda_j-\lambda_\ell)\beta_{\ell,c_\ell}.\] As a consequence, 
\[ (\lambda_\ell-\lambda_j)\alpha_{\ell,c_\ell}=(\lambda_\ell-\lambda_j)\beta_{\ell,c_\ell}.\] As $\lambda_j\neq \lambda_\ell$ by construction we deduce that we must have, for any $c_\ell$, $\alpha_{\ell,c_\ell}=\beta_{\ell,c_\ell}$. However, this implies $\lambda_{\ell}\alpha_{\ell,c_\ell}=c_\ell$ for any $c_\ell$, which is impossible. The proof is concluded.

 \end{proof}
 
\section{(Non-)Monotonicity properties: proofs of Theorems \ref{Th:Frequency}-\ref{Th:CarrereNadin}}
\begin{proof}[Proof of Theorem \ref{Th:CarrereNadin}]
Let $m_0\in \mathscr C^{1,2}_{\mathrm{per}}(\TT)$ be such that:
\[ \forall x \in \T\,, \fint_0^T m_0(t,x)dt=0\,, \fint_0^T \max_{x\in \T}m_0(t,x)dt>0.\] From Theorem \ref{Th:BeltramoHess} we know that for any $\mu>0$, if we let $\lambda_H(\frac1\e m_0,\mu)$ be the effective Hamiltonian associated with $\partial_t+\mu(-\Delta+H(\n_x ))+\frac1\e m_0$  then 
\[ \lim_{\e\to0}\lambda_H(\frac1\e m_0,1)=-\infty.\] We fix $\e_0>0$ such that $\lambda_H(\frac1{\e_0}m_0,1)\leq -1$ and we define $m:=\frac1{\e_0}m_0$. From Theorem \ref{Th:LowDiffusion2} there holds 
\[ \lim_{\mu\to 0}\lambda_H(m,\mu)=0\] and thus $\lambda_H(m,0)>\lambda_H(m,1)$. On the other hand, 
$\lim_{\mu\to \infty}\lambda_H(m,\mu)=0$ as well, whence the conclusion.
\end{proof}

Before we prove Theorem \ref{Th:Frequency} we show how to derive Lemma \ref{Le:QuantifiedDV}.
\begin{proof}[Proof of Lemma \ref{Le:QuantifiedDV}]
We let $(\lambda_H,\p_H)$ be the eigenpair and $\eta_H$ the associated invariant measure.  For any $\p\in \mathscr C^{1,2}(\TT)$ define $z:=\p-\p_H$. Then, letting $F(\p)=\partial_t\p-\Delta \p+H(\n_x \p)+m$ we obtain, by the mean-value theorem, the existence of a vector field $\xi$ such that 
\begin{multline*} \iint_\TT (F(\p)-F(\p_H))d\eta_H=\iint_\TT (\partial_t z-\Delta z+\langle \n_pH(\n_x\p),\n z\rangle)d\eta_H\\+\frac12\iint_\TT \n^2_{pp}H(\xi)[\n z,\n z]d\eta_H.\end{multline*} The conclusion follows since $\eta_H$ solves \eqref{Eq:DefInvariantMeasure} and as $F(\p_H)=\lambda_H$. By the same reasoning, if $H$ satisfies \eqref{Eq:HypH} and is thus in particular strictly convex we have 
\[\iint_\TT F(\p)d\eta_H>\lambda_H+\iint_{\{\n\p\neq \n\p_H\}} d\eta_H.\] By the maximum principle $\min_\TT \eta_H>0$ and so $\iint_\TT F(\p)d\eta_H=\lambda_H$ implies $|\{\n\p\neq\n \p_H\}|=0$. The proof is concluded.
\end{proof}

\begin{proof}[Proof of Theorem \ref{Th:Frequency}]By assumption, since either $H$ is reversible or $m$ is symmetric in time we have the following property: letting for any $\tau$ $(\lambda_{\pm}(\tau),\p_{\pm,\tau})$ be the eigenpair associated with the Hamiltonian $\pm\tau\partial_t-\Delta+H(\n_x\cdot)+m$ and $\eta_{\pm,\tau}$ be the associated invariant measure there holds $\lambda_+(\tau)=\lambda_-(\tau)$. Now, consider the Gateaux derivative $(\dot\lambda_+(\tau),\dot\p_{+,\tau})$ of the map $\tau\mapsto (\lambda_+(\tau),\p_{+,\tau})$. By direct computations, $(\dot\lambda_+(\tau),\dot\p_{+,\tau})$ solves $L \dot\p_{+,\tau}=-\dot\lambda_+(\tau)-\partial_t\p_{+,\tau}$ with $L= \tau\partial_t-\Delta +\langle \n_pH(\n_x\p_{+,\tau}),\n_x\rangle$. Letting $L^*=-\tau\partial_t-\Delta-\n\cdot(\n_pH(\n_x\p_{+,\tau})\cdot)$ be the adjoint of $L$, observe that $L^*\eta_{+,\tau}=0$. Finally, introducing $F(\p)=\tau\partial_t\p-\Delta \p+H(\n_x \p)+m$ and $\overline F(\p)=-\tau\partial_t\p-\Delta \p+H(\n_x \p)+m$ and noting that $2\tau\partial_t \p_{+,\tau}=F(\p_{+,\tau})-\overline F(\p_{+,\tau})$ we observe that 
\begin{align*}
\dot\lambda_+(\tau)&=\iint_\TT \partial_t\p_{+,\tau}d\eta_{+,\tau}
\\&=\frac1{2\tau}\iint_\TT (F(\p_{+,\tau})-\overline F(\p_{+,\tau}))d\eta_{+,\tau}
\\&=\frac1{2\tau}\iint_\TT (\overline F(\p_{-,\tau})-\overline F(\p_{+,\tau}))d\eta_{+,\tau}
\\&\text{ since by reversibility $F(\p_{+,\tau})=\lambda_+(\tau)=\lambda_-(\tau)=\overline F(\p_{-,\tau})$}
\\&=\frac1{2\tau}\iint_\TT (\overline F(\p_{-,\tau})-\overline F(\p_{+,\tau}))d\eta_{-,\tau}
\\&\text{ since by reversibility $\eta_{+,\tau}=\eta_{-,\tau}$}
\\&\leq 0
\end{align*}
where the last inequality is a consequence of Lemma \ref{Le:QuantifiedDV}.\textcolor{black}{From Lemma \ref{Le:QuantifiedDV}, this inequality is strict, unless $\p_{-,\tau}=\p_{+,\tau}+\bar\p_\tau(t)$ for some $T$-periodic function $T$. Should this be the case, this provides
\[ \p_\tau'(t)=-2\partial_t\p_{+,\tau},\] so that $\partial_t\p_{+,\tau}$ only depends on $t$. In particular, we can write $\p_{+,\tau}=f_0(t)+f_1(x)$ for two functions $f_0\,, f_1$, and this gives
\[\lambda_H(m)+f_0'(t)-\Delta f_1+H(\n_xf_1)=-m\] so that $m$ finally writes $m(t,x)=m_0(x)+h(t)$, as announced. 
}
\end{proof}

\section{Operators with advection: proof of Theorem \ref{Th:LowDiffusivityAdvection}}
\begin{proof}[Proof of Theorem \ref{Th:LowDiffusivityAdvection}]
To simplify notations we let, for any $\e>0$, $(\lambda(\e),\p_\e)$ be the unique eigenpair associated with $\e(-\Delta+H(\n_x))+\langle A,\n_x\rangle+m$. We let $\eta_\e\in \mathcal P(\T)$ be the associated invariant measure, that is, the solution of $\e(-\Delta-\n_x(\n_pH(\n_x\p_\e)\cdot))-\n_x\cdot(A\cdot)=0$. First of all, let $\eta_0\in \mathcal P(\T)$ be a closure point of $\{\eta_\e\}_{\e\to 0}$, and $\lambda_0$ be a closure point of $\{\lambda(\e)\}_{\e\to 0}$ (the existence of $\lambda_0$ is guaranteed by bounds similar to that of Theorem \ref{Th:Basic}). Let us prove that 
\[ \eta_0\in \mathcal P_A(\T).\] From \eqref{Eq:DonskerVaradhan}, for any $f\in \mathscr C^2(\T)$, $-\lambda(\e)\leq \int_\T \left(\e(-\Delta f+H(\n_x f))+\langle A,\n_xf\rangle+m\right)d\eta_\e.$ In particular, passing to the limit $\e\to 0$ we obtain 
\[ \forall f\in \mathscr C^2(\T)\,, -\lambda_0\leq \int_\T \langle A,\n_x f\rangle d\eta_0+\int_\T md\eta_0,\] whence the linear map $\mathscr C^2(\T)\ni f\mapsto \int_\T  \langle A,\n_x f\rangle d\eta_0$ is bounded from below. Thus, this map is identically zero, which means that $\eta_0\in \mathcal P_A(\T)$. Furthermore, this implies that 
\begin{equation}\label{Eq:LDA1}
-\lambda_0\leq \int_\T md\eta_0.\end{equation} Second, let us study the sequence $\{\p_\e\}_{\e\to 0}$. From \eqref{Eq:AssumptionVectorField} let $\eta_A$ be a positive, smooth probability measure associated with $A$. From \eqref{Eq:HypH}, $\e H(\n_x\p)\geq H(\e^\beta \n_x\p)$ for some $\beta<1$.
We obtain, letting $\psi_\e:=\e^{\beta} (\p_\e-\int_\T \p_\e)$, 
\[ -\e^{1-\beta}\int_\T \psi_\e \Delta \eta_A+\int_\T H(\n_x\psi_\e)d\eta_A+\int_\T md\eta_A\leq -\lambda(\e).\] 
From \eqref{Eq:HypH} and the fact that $\eta_A$ is positive we deduce that $\{ \int_\T H(\n_x\psi_\e)\}_{\e\to 0}$ is bounded, so that there exists $r>1$ and a strong $L^r(\T)$-closure point  $\psi_0$ of the sequence $\{\psi_\e\}_{\e\to 0}$. Now, let $\eta\in \mathcal P_A(\T)$ be any smooth $A$-invariant measure. The same computations lead to 
\[-\e^{1-\beta}\int_\T \psi_\e \Delta \eta+\int_\T md\eta\leq -\e^{1-\beta}\int_\T \psi_\e \Delta \eta +\int_\T H(\n_x\psi_\e)d\eta +\int_\T md\eta \leq -\lambda(\e).\] Passing to the limit $\e\to 0$ we obtain that 
for any  $\eta\in \mathcal P_A(\T)\cap \mathscr C^2(\T)\,, \int_\T md\eta\leq -\lambda_0.$ From assumption \eqref{Eq:AssumptionVectorField} and \eqref{Eq:LDA1} this provides
\[ \forall \eta\in \mathcal P_A(\T)\,, \int_\T md\eta\leq -\lambda_0\leq \int_\T md\eta_0.\] We thus deduce that $\eta_0$ maximises $\mathcal P_A(\T)\ni\eta\mapsto\int_\T md\eta$, and the conclusion follows.
\end{proof}
%
%
\bibliographystyle{abbrv}
\bibliography{BiblioHJB}

\begin{thebibliography}{10}

\bibitem{bai2023counterexampleprincipaleigenvalueelliptic}
X.~Bai, X.~Xu, K.~Zhang, and M.~Zhou.
\newblock A counterexample for the principal eigenvalue of an elliptic operator
  with large advection, 2023.

\bibitem{Barles1985}
G.~Barles.
\newblock Asymptotic behavior of viscosity solutions of first order
  {Hamilton}-{Jacobi} equations.
\newblock {\em Ric. Mat.}, 34:227--260, 1985.

\bibitem{zbMATH01578865}
G.~Barles and P.~E. Souganidis.
\newblock Space-time periodic solutions and long-time behavior of solutions to
  quasi-linear parabolic equations.
\newblock {\em SIAM J. Math. Anal.}, 32(6):1311--1323, 2001.

\bibitem{zbMATH03897446}
A.~Beltramo and P.~Hess.
\newblock On the principal eigenvalue of a periodic-parabolic operator.
\newblock {\em Commun. Partial Differ. Equations}, 9:919--941, 1984.

\bibitem{zbMATH02196652}
H.~Berestycki, F.~Hamel, and N.~Nadirashvili.
\newblock Elliptic eigenvalue problems with large drift and applications to
  nonlinear propagation phenomena.
\newblock {\em Commun. Math. Phys.}, 253(2):451--480, 2005.

\bibitem{zbMATH02130241}
P.~Bernard and J.-M. Roquejoffre.
\newblock Convergence to time-periodic solutions in time-periodic
  {Hamilton}-{Jacobi} equations on the circle.
\newblock {\em Commun. Partial Differ. Equations}, 29(3-4):457--469, 2004.

\bibitem{zbMATH07147349}
M.~Briane.
\newblock Isotropic realizability of fields and reconstruction of invariant
  measures under positivity properties. {Asymptotics} of the flow by a
  non-ergodic approach.
\newblock {\em SIAM J. Appl. Dyn. Syst.}, 18(4):1846--1866, 2019.

\bibitem{zbMATH07226744}
C.~Carr{\`e}re and G.~Nadin.
\newblock Influence of mutations in phenotypically-structured populations in
  time periodic environment.
\newblock {\em Discrete Contin. Dyn. Syst., Ser. B}, 25(9):3609--3630, 2020.

\bibitem{Chasseigne_2019}
E.~Chasseigne and N.~Ichihara.
\newblock Ergodic problems for viscous hamilton--jacobi equations with inward
  drift.
\newblock {\em {SIAM} Journal on Control and Optimization}, 57(1):23--52, jan
  2019.

\bibitem{contreras2013weak}
G.~Contreras, R.~Iturriaga, and H.~Sanchez-Morgado.
\newblock Weak solutions of the hamilton-jacobi equation for time periodic
  lagrangians, 2013.

\bibitem{zbMATH01028210}
D.~Daners.
\newblock Periodic-parabolic eigenvalue problems with indefinite weight
  functions.
\newblock {\em Arch. Math.}, 68(5):388--397, 1997.

\bibitem{zbMATH01525790}
D.~Daners.
\newblock Existence and perturbation of principal eigenvalues for a
  periodic-parabolic problem.
\newblock {\em Electron. J. Differ. Equ.}, 2000:51--67, 2000.

\bibitem{zbMATH03549753}
M.~D. Donsker and S.~R.~S. Varadhan.
\newblock On a variational formula for the principal eigenvalue for operators
  with maximum principle.
\newblock {\em Proc. Natl. Acad. Sci. USA}, 72:780--783, 1975.

\bibitem{zbMATH03699420}
M.~D. Donsker and S.~R.~S. Varadhan.
\newblock On the principal eigenvalue of elliptic second order differential
  operators.
\newblock Proc. int. {Symp}. on stochastic differential equations, {Kyoto}
  1976, 41-47 (1978)., 1978.

\bibitem{Evans:2001aa}
L.~C. Evans and D.~Gomes.
\newblock Effective hamiltonians and averaging for hamiltonian dynamics i.
\newblock {\em Archive for Rational Mechanics and Analysis}, 157(1):1--33,
  2001.

\bibitem{zbMATH06797741}
A.~Fathi.
\newblock Weak {KAM} theory: the connection between {Aubry}-{Mather} theory and
  viscosity solutions of the {Hamilton}-{Jacobi} equation.
\newblock In {\em Proceedings of the International Congress of Mathematicians
  (ICM 2014), Seoul, Korea, August 13--21, 2014. Vol. III: Invited lectures},
  pages 597--621. Seoul: KM Kyung Moon Sa, 2014.

\bibitem{zbMATH00049232}
P.~Hess.
\newblock {\em Periodic-parabolic boundary value problems and positivity},
  volume 247 of {\em Pitman Res. Notes Math. Ser.}
\newblock Harlow: Longman Scientific \&| Technical; New York: John Wiley \&|
  Sons, Inc., 1991.

\bibitem{zbMATH01579419}
V.~Hutson, W.~Shen, and G.~T. Vickers.
\newblock Estimates for the principal spectrum point for certain time-dependent
  parabolic operators.
\newblock {\em Proc. Am. Math. Soc.}, 129(6):1669--1679, 2001.

\bibitem{zbMATH07668634}
K.-Y. Lam and Y.~Lou.
\newblock {\em Introduction to reaction-diffusion equations. {Theory} and
  applications to spatial ecology and evolutionary biology}.
\newblock Lect. Notes Math. Model. Life Sci. Cham: Springer, 2022.

\bibitem{LionsPapanicolaouVaradhan}
P.-L. Lions, G.~Papanicolaou, and S.~Varadhan.
\newblock Homogenization of hamilton-jacobi equation.
\newblock Unpublished, 1987.

\bibitem{zbMATH07183857}
S.~Liu and Y.~Lou.
\newblock A functional approach towards eigenvalue problems associated with
  incompressible flow.
\newblock {\em Discrete Contin. Dyn. Syst.}, 40(6):3715--3736, 2020.

\bibitem{Liu}
S.~Liu and Y.~Lou.
\newblock Monotonicity, asymptotics and level sets for principal eigenvalues of
  some elliptic operators with shear flow.
\newblock {\em Journal de Math{\'{e}}matiques Pures et Appliqu{\'{e}}es}, To
  Appear.

\bibitem{zbMATH07122717}
S.~Liu, Y.~Lou, R.~Peng, and M.~Zhou.
\newblock Monotonicity of the principal eigenvalue for a linear time-periodic
  parabolic operator.
\newblock {\em Proc. Am. Math. Soc.}, 147(12):5291--5302, 2019.

\bibitem{zbMATH05177907}
D.~Massart.
\newblock Subsolutions of time-periodic {Hamilton}-{Jacobi} equations.
\newblock {\em Ergodic Theory Dyn. Syst.}, 27(4):1253--1265, 2007.

\bibitem{Namah_1999}
G.~Namah and J.-M. Roquejoffre.
\newblock Remarks on the long time behaviour of the solutions of
  hamilton-jacobi equations.
\newblock {\em Communications in Partial Differential Equations},
  24(5-6):883--893, jan 1999.

\bibitem{zbMATH01595998}
J.-M. Roquejoffre.
\newblock Convergence to steady states or periodic solutions in a class of
  {Hamilton}-{Jacobi} equations.
\newblock {\em J. Math. Pures Appl. (9)}, 80(1):85--104, 2001.

\bibitem{zbMATH01851000}
D.~Talay.
\newblock Stochastic {Hamiltonian} systems: {Exponential} convergence to the
  invariant measure, and discretization by the implicit {Euler} scheme.
\newblock {\em Markov Process. Relat. Fields}, 8(2):163--198, 2002.

\end{thebibliography}

\end{document}